\def\newaliasedtheorem#1[#2]#3{
  \newaliascnt{#1@alt}{#2}
  \newtheorem{#1}[#1@alt]{#3}
  \expandafter\newcommand\csname #1@altname\endcsname{#3}
}
\theoremstyle{plain}
\newtheorem{theorem}{Theorem}[section]
\theoremstyle{definition}
\theoremstyle{remark}
\numberwithin{equation}{section}
\def\11{{\rm 1~\hspace{-1.4ex}l} }
\def\R{\mathbb R}
\def\Z{\mathbb Z}
\def\E{\mathbb E}
\def\T{\mathbb T}
\title
[NLS with spatial white noise ]
{Global dynamics of the $2d$ NLS with white noise potential and generic polynomial nonlinearity}
\author[N. Tzvetkov \and N. Visciglia]
{N. Tzvetkov \and N. Visciglia}
\address{N.~Tzvetkov, CY Cergy-Paris Universit\'e,  Cergy-Pontoise, F-95000, UMR 8088 du CNRS}
\email{nikolay.tzvetkov@cyu.fr}
\address{N.~Visciglia, Dipartimento di Matematica, Universit\`a di Pisa, Largo Bruno Pontecorvo, 5, 56100 Pisa, Italy}
\email{nicola.visciglia@unipi.it}
\thanks{N.T. is supported by ANR grant ODA (ANR-18-CE40-0020-01), N.V. is supported by the project PRIN grant 2020XB3EFL and he acknowledge the Gruppo Nazionale per l' Analisi Matematica, la Probabilit\`a e le loro Applicazioni (GNAMPA) 
of the Istituzione Nazionale di Alta Matematica (INDAM)}
\begin{document}

\maketitle

\begin{abstract}
Using an approach introduced by Hairer-Labb\' e we construct a unique global dynamics for  the NLS on $\T^2$ with a white noise potential and an arbitrary polynomial nonlinearity. We build the solutions as a limit of classical solutions (up to a phase shift) of the same equation with smoothed potentials.
This is an improvement on previous contributions of us and  Debussche-Weber dealing with quartic nonlinearities and cubic nonlinearities respectively. 
\end{abstract}
%%%%%
\section{Introduction}
%Next we shall use $\xi, \xi_\varepsilon, C_\varepsilon, Y_\varepsilon, Y, :|\nabla Y_\varepsilon|^2:$ defined in \cite{TV}.
%\\
The aim of this work is to extend the result of  \cite{TV}  to an arbitrary polynomial nonlinearity. As announced in \cite{TV} this will require, in addition to the modified energies introduced in 
 \cite{TV}, a suitable use of the dispersive effect. 
 \\
 
We therefore aim to solve, in a sense to be defined, the following Cauchy problem 
\begin{equation}\label{NLSwhite}
i\partial_t u  = \Delta u + \xi u - u|u|^p, \quad
u(0,x)=u_0(x), \quad (t,x)\in \R\times \T^2,
\end{equation}
where $p\geq 2$ measures the strength of the nonlinear interaction and $\xi(x,\omega)$ is the (zero mean value) space white noise which can be seen as the distribution of the random Fourier  series 
$$
\xi(x,\omega)=\sum_{n\in\Z^2,n\neq 0}\, g_{n}(\omega)\,e^{in\cdot x}\,,
$$
where $g_n(\omega)$ are standard complex gaussians such that  $\overline{g_n}(\omega)=g_{-n}(\omega)$ and otherwise independent. 
\\

Thanks to the work by Bourgain \cite{Bo}, we know how to construct the global dynamics of \eqref{NLSwhite} if $\xi$ is replaced by a smooth potential. 
Therefore a natural way to solve  \eqref{NLSwhite} is to regularize  $\xi$ and to try to pass to a limit in the regularized problems. 
As shown in \cite{DW,TV} such a passage to limit is possible for $p\leq 3$ but only for well-prepared initial data.
Therefore, we are  interested in the solutions to the following regularization of \eqref{NLSwhite}
\begin{equation}\label{NLSwhite_tris}
i\partial_t u_\varepsilon  = \Delta u_\varepsilon + \xi_\varepsilon(x,\omega) u_\varepsilon - u_\varepsilon |u_\varepsilon|^p\,\, , \quad u_{\varepsilon}(0,x)=
u_0(x) e^{Y(x,\omega)-Y_\varepsilon(x,\omega)}\, \, ,
\end{equation}
where $\xi_\varepsilon=\chi_{\varepsilon}\ast \xi$, $\varepsilon \in (0,1)$ is a regularization of $\xi$ with
$\chi_{\varepsilon}(x)=\varepsilon^{-2}\chi(x/\varepsilon)$, where $\chi(x)$ is smooth with a support in $\{|x|<1/2\}$
and $\int_{\T^2}\chi dx=1$.  
As in \cite{DW,TV}, in \eqref{NLSwhite_tris},  $Y=\Delta^{-1} \xi$ and $Y_\varepsilon=\Delta^{-1} \xi_\varepsilon$ is its regularization .
\\

The main result of the paper is the following one, which is an extension of the one proved in \cite{TV} where we were restricted to the powers
$p\in [2,3]$.
\begin{theorem} \label{corproof}   Assume $p\geq 2$
and $u_0(x)$ be such that $e^{Y(x,\omega)} u_0(x) \in H^2(\T^2)$ a.s. 
%$\E(\|e^{Y(x,\omega)} u_0(x)\|_{H^2(\T^2)}^q)<\infty$ for every $q\in [1,\infty)$. 
Then there exists an event $\Sigma\subset \Omega$ such that $p(\Sigma)=1$ and
for every $\omega\in \Sigma$
% we have $e^{Y(x, \omega)} u_0(x)\in H^2(\T^2)$  and 
there exists   
$$ 
v (t,x, \omega)\in \bigcap_{\gamma\in [0, 2)}{\mathcal C}(\R; H^\gamma(\T^2))
$$
such that for every $T>0$ and $\gamma \in [0,2)$ we have:
\begin{equation}\label{eas}
\sup_{t\in [-T,T]} \|e^{-iC_\varepsilon t} e^{Y_\varepsilon(x, \omega)} u_\varepsilon(t,x,\omega)- v (t,x,\omega)\|_{H^\gamma(\T^2)}
\overset{\varepsilon\rightarrow 0} \longrightarrow 0,
\end{equation} where 
$
C_\varepsilon=\E (|\nabla Y_\varepsilon(x,\omega)|^2)
$
(this quantity is independent of $x$)
%=\sum_{n\in\Z^2,n\neq 0}\, \frac{\rho^2(\varepsilon n)}{|n|^2}$ 
and $u_\varepsilon(t,x,\omega)$ are solutions to \eqref{NLSwhite_tris}.
Moreover for $\gamma\in [0,1)$ and $\omega\in \Sigma$ we have
\begin{equation}\label{diffnew}
\sup_{t\in [-T,T]} \big \||u_\varepsilon(t,x,\omega)| - e^{-Y(x,\omega)}| v(t,x, \omega)| \big \|_
{{H^\gamma(\T^2)}\cap L^\infty (\T^2)} \overset{\varepsilon\rightarrow 0}
\longrightarrow 0.
\end{equation}
\end{theorem}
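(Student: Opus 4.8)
The plan is to reduce \eqref{NLSwhite_tris} to a renormalized equation with a barely singular potential by the Hairer--Labb\'e gauge transform, and then to prove uniform-in-$\varepsilon$ bounds and convergence for the transformed solutions. First I would introduce the gauge-and-phase transform $v_\varepsilon:=e^{-iC_\varepsilon t}e^{Y_\varepsilon}u_\varepsilon$, which is precisely the quantity appearing in \eqref{eas}. Using $\Delta Y_\varepsilon=\xi_\varepsilon$ together with the fact that $Y_\varepsilon$ is real and time-independent, a direct computation cancels the singular product $\xi_\varepsilon u_\varepsilon$ and shows that
\begin{equation*}
i\partial_t v_\varepsilon=\Delta v_\varepsilon-2\nabla Y_\varepsilon\cdot\nabla v_\varepsilon+\big(|\nabla Y_\varepsilon|^2-C_\varepsilon\big)v_\varepsilon-e^{-pY_\varepsilon}|v_\varepsilon|^p v_\varepsilon .
\end{equation*}
The decisive structural gain is that the datum becomes independent of $\varepsilon$: indeed $v_\varepsilon(0)=e^{Y_\varepsilon}u_\varepsilon(0)=e^{Y}u_0\in H^2(\T^2)$ by the well-prepared choice in \eqref{NLSwhite_tris} and the hypothesis on $u_0$.

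Next I would isolate the probabilistic input. On a full-measure event $\Sigma$ one has, as $\varepsilon\to0$, the convergence $Y_\varepsilon\to Y$ in $C^{1-}\cap L^\infty(\T^2)$, while $\nabla Y_\varepsilon\to\nabla Y$ and the Wick square $|\nabla Y_\varepsilon|^2-C_\varepsilon$ converge in Besov spaces of regularity $-s$ for every $s>0$; here the subtraction of $C_\varepsilon=\E(|\nabla Y_\varepsilon|^2)$ is exactly the renormalization that makes the square converge. Likewise $e^{Y_\varepsilon}\to e^{Y}$ and $e^{-pY_\varepsilon}\to e^{-pY}$ in $C^{1-}\cap L^\infty$. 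These are standard facts for the Gaussian field $Y=\Delta^{-1}\xi$, which in two dimensions is almost Lipschitz, and for its exponentials. From here on the analysis is deterministic, the above limits being frozen coefficients of the limiting equation
\begin{equation*}
i\partial_t v=\Delta v-2\nabla Y\cdot\nabla v+\Xi\,v-e^{-pY}|v|^p v,\qquad v(0)=e^{Y}u_0,
\end{equation*}
where $\Xi$ denotes the limit of $|\nabla Y_\varepsilon|^2-C_\varepsilon$.

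The core is a uniform-in-$\varepsilon$ a priori bound for $v_\varepsilon$ in $\mathcal C([-T,T];H^\gamma(\T^2))$ for every $\gamma<2$. The low norms I would control by the modified-energy method of \cite{TV}: to $\|v_\varepsilon\|_{H^\gamma}^2$ one adds a correction built from the random potential, tuned so that its time derivative no longer sees the worst contribution of $\nabla Y_\varepsilon\cdot\nabla v_\varepsilon$ and of $(|\nabla Y_\varepsilon|^2-C_\varepsilon)v_\varepsilon$; since these converge in $C^{-s}$, the remaining products are handled by paraproduct and multiplication inequalities, and it is precisely the $0^-$ regularity of the limiting potential $\Xi$ that caps the scheme at $\gamma<2$, the Anderson-type operator $\Delta+\Xi$ having operator domain only $H^{2-}$. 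The genuinely new difficulty, and the main obstacle, is the high-degree nonlinearity $e^{-pY_\varepsilon}|v_\varepsilon|^p v_\varepsilon$ for arbitrary $p\ge2$: the energy method alone closes only for $p\le3$, because controlling its $H^\gamma$ contribution requires bounds on $\|v_\varepsilon\|_{L^\infty}^p$ that the energy does not provide in the $2d$-critical regime. Here I would bring in the dispersive effect, running Bourgain's periodic Strichartz estimates \cite{Bo} on $\T^2$ and treating the transport term and the rough potential, which after the gauge transform are only mildly singular, as perturbations of the free Schr\"odinger evolution; interpolating the resulting space-time bounds with the energy bounds lets one absorb the nonlinearity and close the bootstrap uniformly in $\varepsilon$.

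Finally I would promote the uniform bounds to convergence. Writing the equation for $v_\varepsilon-v_{\varepsilon'}$, the source terms are products of the differences of the frozen data, which tend to $0$ on $\Sigma$, with the uniform bounds established above; a Gronwall argument in $H^\gamma$, $\gamma<2$, then shows that $(v_\varepsilon)$ is Cauchy in $\mathcal C([-T,T];H^\gamma(\T^2))$ and identifies its limit $v$ with the solution of the limiting equation, yielding \eqref{eas} and $v\in\bigcap_{\gamma\in[0,2)}\mathcal C(\R;H^\gamma(\T^2))$. For \eqref{diffnew} I would observe that $|u_\varepsilon|=e^{-Y_\varepsilon}|v_\varepsilon|$, the unit-modulus phase and the positivity of $e^{Y_\varepsilon}$ dropping out of the modulus, so the claim reduces to $e^{-Y_\varepsilon}|v_\varepsilon|\to e^{-Y}|v|$ in $H^\gamma\cap L^\infty$ for $\gamma<1$; since $Y\in L^\infty$ this follows from $v_\varepsilon\to v$, $e^{-Y_\varepsilon}\to e^{-Y}$ and the fact that $H^\gamma(\T^2)\cap L^\infty(\T^2)$ is an algebra for $\gamma\ge0$.
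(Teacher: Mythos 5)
Your overall architecture coincides with the paper's: the gauge transform \eqref{gaugetranf} producing the renormalized equation \eqref{NLSepsilon} with an $\varepsilon$-independent datum, the stochastic convergence of $Y_\varepsilon$, $\nabla Y_\varepsilon$ and the Wick square, the modified energies of \cite{TV} for the $H^2$-level bound, a dispersive input to handle $p>3$, a Cauchy/Gronwall argument for the convergence, and the identity $|u_\varepsilon|=e^{-Y_\varepsilon}|v_\varepsilon|$ for \eqref{diffnew}. The gap sits in the one step that is actually new in this paper, namely the dispersive estimate. First, you cannot run Bourgain's periodic Strichartz estimates \cite{Bo} for $e^{it\Delta}$ and absorb $H_\varepsilon-\Delta$ as a perturbation on time intervals of length $O(1)$: the perturbation costs a full derivative and has size $|\log\varepsilon|$ in every $L^p$ (see \eqref{bound1}), so its Duhamel contribution is not small on unit time scales. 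The paper instead follows \cite{MZ} (in the spirit of \cite{BGT,ST,T}): it slices $[0,T]$ into intervals of length comparable to the inverse of the Littlewood--Paley frequency, accepts a loss of $1/r$ derivatives from summing over the $\sim TN$ intervals, and must track the fact that $H_\varepsilon-\Delta$ does not preserve frequency localization, which forces the two-parameter decomposition \eqref{object}. The output is the Strichartz estimate with loss \eqref{strich+}, not a lossless Bourgain-type estimate.

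Second, and more importantly, you never specify the quantitative output that the dispersive bounds must deliver, and this is exactly where the argument either closes or does not. The mechanism in the paper is: Proposition \ref{BF} bounds $\|v_\varepsilon\|_{L^4((0,T);W^{\frac34^-,4})}$ by $|\log\varepsilon|^C\bigl(1+\|v_\varepsilon\|_{L^\infty((0,T);H^2)}^{0^+}\bigr)$; interpolation with the $H^2$ norm (Proposition \ref{maiM}) then gives $\|v_\varepsilon\|_{L^2((0,T);W^{1,4})}^2\le C(\omega,T)|\log\varepsilon|^C\bigl(1+\|v_\varepsilon\|_{L^\infty((0,T);H^2)}^{\eta}\bigr)$ with $\eta<1$, i.e.\ \emph{sublinear} in the $H^2$ norm; plugging this into the time-integrated energy flux yields $\int_0^T|{\mathcal H}_\varepsilon(v_\varepsilon)|\le C(\omega,T)|\log\varepsilon|^C+\|e^{-Y_\varepsilon}\Delta v_\varepsilon\|_{L^\infty((0,T);L^2)}^{\gamma}$ with $\gamma<2$ (Proposition \ref{firstcor}), and the bootstrap $X^2\le C(\omega,T)|\log\varepsilon|^C+X^{\gamma}$ closes to give \eqref{key}. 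Saying that ``interpolating the space-time bounds with the energy bounds lets one absorb the nonlinearity'' skips precisely this point: for $p\le3$ the Sobolev inequality already produces a power strictly below $2$, and the entire content of the extension to general $p$ is that the time-averaged $W^{1,4}$ norm can be made to cost an arbitrarily small power of the $H^2$ norm, uniformly in $p$. A minor additional correction: the modified energy of \cite{TV} is an $H^2$-level object, comparable to $\|e^{-Y_\varepsilon}\Delta v_\varepsilon\|_{L^2}^2$ by Proposition \ref{bounds}, not a family of corrected $H^\gamma$ norms for $\gamma<2$; the convergence in $H^\gamma$ is obtained afterwards by trading the $|\log\varepsilon|^{C(\omega,T)}$ growth of the $H^2$ bound against the polynomial-in-$\varepsilon$ rates of convergence of the stochastic data, as in Section 5 of \cite{TV}.
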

The proof of Theorem \ref{corproof}   crucially relies on the modified energies and some results from our previous paper \cite{TV}. This  makes that the present paper is not self-contained.  
As announced in  \cite{TV}, the new ingredient allowing to deal with general nonlinearities  is the use of the dispersive effect which leads to Proposition~\ref{maiM} below.  Proposition~\ref{maiM} displays a gain of regularity with respect to the Sobolev inequality used in \cite{TV}, once a time averaging is performed. Note that we allow logarithmic losses in $\varepsilon$ in our dispersive bounds. As already used in \cite{DW} these losses can be compensated by the polynomial in $\varepsilon$ convergence of $ Y_\varepsilon$ to $Y$ in the natural norms. 
\\

In \cite{GUZ,MZ} a different approach to the study of \eqref{NLSwhite} is introduced. This approach is based on the construction of a suitable  self adjoint realization of $ \Delta  + \xi$.
Then the initial data in \eqref{NLSwhite}  is chosen in the domain of this self adjoint operator. Being in the domain of this self adjoint operator is the substitute of our assumption of well prepared initial data  $e^{Y(x,\omega)} u_0(x) \in H^2(\T^2)$. At the best of our knowledge the present paper is the first one where 
global well-posedness is proved for \eqref{NLSwhite} with an arbitrary polynomial nonlinearity $p$, extending the papers \cite{DW} and \cite{TV}. Our proof is based on the approach
introduced by Hairer-Labb\' e in \cite{HL}.
%The approach of \cite{GUZ,MZ} is well adapted to obtain local in time results. If one wishes to get global dynamics by the methods of \cite{GUZ,MZ} one needs to derive a priori bounds for the equation 
%\begin{equation}\label{Gamma}
%i\partial_t u  = H u -  \Gamma^{-1} \big(\Gamma u|\Gamma u|^p\big),
%\end{equation}
%where $\Gamma$ is an implicitly defined map involved in the definition of the domain of the self adjoint realization of $ \Delta  + \xi$ and $H=\Gamma^{-1}( \Delta  + \xi)\Gamma$.
%Then in order to get a priori bounds by the approach of  \cite{GUZ,MZ}  one needs to deal with the presence of $\Gamma$ in \eqref{Gamma}.
%We find that a priori bounds such as the ones provided by our modified energies are more straightforward in the context of \eqref{NLSepsilon} below than in the context of \eqref{Gamma} because of the explicit dependence on $Y_\varepsilon$ in \eqref{NLSepsilon} .}
\\

For the sake of simplicity, we prove Theorem~\ref{corproof} in the context of the flat  torus $\T^2$. 
However, it is quite likely that a similar result holds in the context of a general compact riemannian boundaryless manifolds.
Indeed, the dispersive estimates can be extended to this setting in a relatively straightforward way. The stochastic analysis results from \cite{TV} can also be extended to this setting by some slightly more involved elaborations. We will address this question and some related issues in a forthcoming work. 
\\

Following  Hairer-Labb\' e \cite{HL}, we set 
\begin{equation}\label{gaugetranf}
v_\varepsilon(t,x,\omega)= e^{-iC_\varepsilon t} e^{Y_\varepsilon(x,\omega)} u_\varepsilon (t,x,\omega)\, ,
\end{equation}
where $C_\varepsilon$ is the constant appearing in Theorem~\ref{corproof}.
Then $v_\varepsilon$ solves 
\begin{equation}\label{NLSepsilon}
i\partial_t v_\varepsilon  = \Delta v_\varepsilon  - 2\nabla v_\varepsilon  \cdot \nabla Y_\varepsilon (x,\omega)+ v_\varepsilon :|\nabla Y_\varepsilon |^2:(x,\omega)
-  e^{-pY_\varepsilon} v_\varepsilon|v_\varepsilon|^p,
\end{equation}
where 
\begin{equation*}
:|\nabla Y_\varepsilon |^2:(x,\omega)=|\nabla Y_\varepsilon |^2(x,\omega)-C_\varepsilon.
\end{equation*}
Following Section 6 in \cite{TV} the proof of
Theorem \ref{corproof} follows from the next theorem concerning the behavior of $v_\varepsilon(t,x,\omega)$,
where $:|\nabla Y |^2:(x,\omega)$ is the renormalized potential defined in \cite{TV}.
\begin{theorem}\label{main} Assume $p\geq 2$ and $u_0(x)$ be such that $e^{Y(x,\omega)} u_0(x) \in H^2(\T^2)$ a.s.
Then there exists an event $\Sigma\subset \Omega$ such that $p(\Sigma)=1$ and
for every $\omega\in \Sigma$  there exists $$ v(t,x,\omega)\in \bigcap_{\gamma\in [0, 2)}{\mathcal C}(\R; H^\gamma(\T^2))$$
such that  for every fixed $T>0$ and $\gamma\in [0,2)$ we have: 
$$\sup_{t\in [-T, T]} \|v_\varepsilon(t,x, \omega) - v(t,x, \omega)\|_{H^\gamma(\T^2)}
\overset{\varepsilon\rightarrow 0}\longrightarrow 0.$$
Here we have denoted by
$v_\varepsilon(t,x,\omega)$  for $\omega\in \Sigma$ the unique global solution in the space ${\mathcal C} (\R;H^2(\T^2))$ of the following problem:
\begin{multline}\label{NLSgaugeintronew}
i\partial_t v_\varepsilon  = \Delta v_\varepsilon  - 2\nabla v_\varepsilon  \cdot \nabla Y_\varepsilon(x,\omega) + v_\varepsilon :|\nabla Y_\varepsilon |^2:(x,\omega)
- e^{-pY_\varepsilon}v_\varepsilon|v_\varepsilon|^p, \\
v_\varepsilon(0,x)=v_0(x)\in H^2(\T^2)
\end{multline}
and $ v(t,x, \omega)$ denotes for $\omega\in \Sigma$ the unique global solution in the space ${\mathcal C} (\R; H^\gamma(\T^2))$, for $\gamma\in (1, 2)$,
of the following limit problem:
\begin{multline}\label{NLSgaugeintronewbar}
i\partial_t  v  = \Delta v  - 2\nabla  v  \cdot \nabla Y(x,\omega) +  v :|\nabla Y |^2:(x,\omega)
- e^{-pY} v| v|^p,\,\\
v(0,x)=v_0(x)\in H^2(\T^2)
\end{multline}
where in both Cauchy problems \eqref{NLSgaugeintronew}
and \eqref{NLSgaugeintronewbar} $v_0(x)=e^{Y(x,\omega)} u_0(x)$, $\omega\in \Sigma$.
\end{theorem}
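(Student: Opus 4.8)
The plan is to establish the convergence in two stages: first a uniform-in-$\varepsilon$ a priori bound in $H^2$ for the solutions $v_\varepsilon$ of \eqref{NLSgaugeintronew}, and then a stability/Cauchy-type estimate showing that $v_\varepsilon$ converges in the lower-regularity spaces $H^\gamma$ for $\gamma\in[0,2)$ as $\varepsilon\to0$. The starting point must be the modified-energy machinery from \cite{TV}: the term $v_\varepsilon:\!|\nabla Y_\varepsilon|^2\!:$ is too rough to control the $H^1$ or $H^2$ norm directly, so one works instead with a renormalized (modified) energy $E_\varepsilon(v_\varepsilon)$ that cancels the worst contribution of the renormalized potential and is equivalent, up to lower-order terms, to $\|v_\varepsilon\|_{H^2}^2$. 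The global existence of $v_\varepsilon\in\mathcal C(\R;H^2)$ for fixed $\varepsilon$ comes from the smoothness of $Y_\varepsilon$ (hence of the coefficients in \eqref{NLSgaugeintronew}) together with this energy control, via the standard energy method and the approach of \cite{Bo} for smooth potentials.

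\medskip

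The new input, and the crux of the argument, is to upgrade these bounds uniformly in $\varepsilon$ for an \emph{arbitrary} power $p$. Here I would invoke Proposition~\ref{maiM}: the dispersive (Strichartz-type) estimate gives, after a time-averaging, a gain of regularity over the naive Sobolev embedding used in \cite{TV}, which is exactly what is needed to absorb the nonlinearity $e^{-pY_\varepsilon}v_\varepsilon|v_\varepsilon|^p$ when $p$ is large. The point is that controlling $\|v_\varepsilon|v_\varepsilon|^p\|$ in the energy estimate requires an $L^\infty$-type or high-$L^q$ bound on $v_\varepsilon$ that Sobolev alone cannot supply for large $p$ at the available regularity; the time-integrated dispersive bound of Proposition~\ref{maiM} furnishes this, at the cost of logarithmic losses in $\varepsilon$. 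Crucially, as noted in the introduction, these logarithmic losses are harmless because $Y_\varepsilon\to Y$ polynomially in $\varepsilon$ in the relevant norms, so the products $\log(1/\varepsilon)\cdot\varepsilon^{\theta}$ still tend to zero. Thus I expect the main obstacle to be the careful bookkeeping in the differential inequality for the modified energy: one must show that all the error terms generated by the renormalization and by the high power $p$ close up into a Gronwall-type inequality with $\varepsilon$-uniform constants on any finite interval $[-T,T]$, using the dispersive gain precisely where the nonlinearity degree forces it.

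\medskip

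With the uniform $H^2$ bound in hand, convergence follows by a compactness-plus-uniqueness scheme. I would derive an estimate for the difference $w_\varepsilon:=v_\varepsilon-v_{\varepsilon'}$ (or $v_\varepsilon-v$) in a lower-regularity norm $H^\gamma$ with $\gamma\in(1,2)$: subtracting the equations, the leading differences come from the transport term $\nabla w_\varepsilon\cdot\nabla Y_\varepsilon$, the potential term with $:\!|\nabla Y_\varepsilon|^2\!:$, and the nonlinear term, plus the $\varepsilon$-dependent discrepancies $\nabla(Y_\varepsilon-Y)$, $:\!|\nabla Y_\varepsilon|^2\!:-:\!|\nabla Y|^2\!:$, and $e^{-pY_\varepsilon}-e^{-pY}$. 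Using the stochastic convergence results from \cite{TV} (polynomial rates for these noise-dependent quantities on the full-measure event $\Sigma$), together with the uniform $H^2$ bounds to control the nonlinear differences by a locally Lipschitz estimate, a Gronwall argument in $H^\gamma$ shows that $(v_\varepsilon)$ is Cauchy and identifies the limit $v$ as the unique solution of \eqref{NLSgaugeintronewbar} in $\mathcal C(\R;H^\gamma)$ for $\gamma\in(1,2)$. Interpolating the uniform $H^2$ bound with the $H^\gamma$ convergence for a single $\gamma>1$ then upgrades convergence to all $\gamma\in[0,2)$, and choosing $\Sigma$ to be the intersection of the full-measure events on which Proposition~\ref{maiM} and the stochastic estimates of \cite{TV} hold completes the proof.
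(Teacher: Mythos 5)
Your proposal follows essentially the same route as the paper: a uniform-in-$\varepsilon$ bound on $\|v_\varepsilon\|_{L^\infty((0,T);H^2)}$ obtained from the modified energies ${\mathcal F}_\varepsilon,{\mathcal G}_\varepsilon,{\mathcal H}_\varepsilon$ of \cite{TV} combined with the time-averaged dispersive gain of Proposition~\ref{maiM}, followed by the stability/Cauchy argument of \cite[Section 5]{TV} in lower-regularity norms, with logarithmic losses in $\varepsilon$ beaten by the polynomial convergence of $Y_\varepsilon$ to $Y$. The one minor imprecision is where the dispersive gain actually enters: in the paper it is applied to $\|\nabla v_\varepsilon\|_{L^2((0,T);L^4)}^2$ in the flux term $\int_0^T|{\mathcal H}_\varepsilon(v_\varepsilon)|$ (reducing the power of $\|v_\varepsilon\|_{L^\infty((0,T);H^2)}$ there to some $\eta<1$ so that the final inequality closes with exponent $\gamma<2$), rather than supplying a high-$L^q$ bound on $v_\varepsilon$ for the pure nonlinearity, which is handled by the a priori $H^1$ bound and interpolation.
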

\noindent{\bf Notations} For every $s\in \R$ we  denote $s^+$ any number belonging to $(s,s+\delta)$ for a suitable $\delta>0$, similarly 
$s^-$ denotes any number in $(s-\delta,s)$ for a suitable $\delta>0$.
We shall denote by $L^p, H^s, W^{s,p}$ the functional spaces $L^p(\T^2), H^s(\T^2), W^{s,p}(\T^2)$.
In the sequel we shall denote by $C$ any deterministic finite constant that can change from line to line and by $C(\omega)$ any random variable defined on $\Omega$ and finite a.s.
We shall denote by $C(\omega,T)$ a constant which is increasing w.r.t. $T$ and finite for every $(\omega, T)\in \Sigma\times \R^+$ for a suitable event $\Sigma\subset \Omega$ of full measure.
In the rest of the paper for shortness we will drop writing the $\omega$ dependence of $v_\varepsilon$ and $Y_\varepsilon$.
For every $a,b$ we denote by $\int_a^b$ the integral w.r.t. time variable and $\int_{\T^2}$ the integral on $\T^2$. 
\section{Preliminary facts}

We collect in this section some facts proved 
in \cite{TV} and some useful consequences that will be needed in the sequel.

\begin{prop}
We have the following bound:
\begin{equation}\label{Yest}
\sup_{\varepsilon\in (0,1)} \|Y_\varepsilon(x)\|_{L^\infty}\leq C(\omega) \, ,
\end{equation}
\begin{equation}\label{Yestnew}
\sup_{\varepsilon\in (0,1)} \|\nabla Y_\varepsilon(x)\|_{L^p}\leq C(\omega)|\log \varepsilon|, \quad \forall p\in [1, \infty)\, ,
\end{equation}
%\begin{equation}\label{YestNew}
%\sup_{\varepsilon\in (0,1)} \|\nabla Y_\varepsilon(x)\|_{W^{-s,p}}\leq C(\omega), \quad \forall s>0, \quad p\in [1, \infty)\, ,
%\end{equation}
\begin{equation}\label{Yestnewnew}
\sup_{\varepsilon\in (0,1)} \|:|\nabla Y_\varepsilon|^2:(x)\|_{L^p}\leq C(\omega)|\log \varepsilon|^2\,,\quad \forall p\in [1, \infty)\, .
\end{equation}
%\begin{equation}\label{YestNewNew}
%\sup_{\varepsilon\in (0,1)} \|:|\nabla Y_\varepsilon|^2:(x)\|_{W^{-s,p}}\leq C(\omega), \quad \forall s>0, \quad p\in [1, \infty).
%\end{equation}
For every $T>0$ we have the following estimates for the solutions $v_\varepsilon(t,x)$ of \eqref{NLSgaugeintronew}:
\begin{equation}\label{h1} \sup_{\varepsilon\in (0,1)} \|v_\varepsilon(t,x)\|_{L^\infty((0,T);H^1)}\leq C(\omega),
\end{equation}
\begin{equation}\label{h1+}
\sup_{\varepsilon\in (0,1)} 
\|v_\varepsilon(t,x)\|_{L^\infty((0,T);H^{1^+})}\leq C(\omega) \|v_\varepsilon\|_{L^\infty((0,T);H^2)}^{0^+}\, ,
\end{equation}
\begin{equation}\label{ellreg}
\sup_{\varepsilon\in (0,1)} 
\|v_\varepsilon(t,x)\|_{L^\infty ((0,T);H^2)} \leq C(\omega)+ C(\omega) \|e^{-Y_\varepsilon} \Delta v_\varepsilon\|_{L^\infty((0,T);L^2)}.
\end{equation}
\end{prop}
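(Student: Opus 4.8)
The plan is to separate the six estimates into the probabilistic inputs \eqref{Yest}--\eqref{Yestnewnew} together with the uniform energy bound \eqref{h1}, all of which I would import from \cite{TV}, and the two deterministic consequences \eqref{h1+} and \eqref{ellreg}, which I would derive from them. For the first group the mechanism is the standard $2d$ stochastic analysis: $Y_\varepsilon=\Delta^{-1}\xi_\varepsilon$ has finite pointwise variance, since $\sum_{n\neq 0}|n|^{-4}<\infty$ in $2d$, and is therefore an almost surely bounded Gaussian field, which gives \eqref{Yest}; its gradient has logarithmically divergent variance, $\sum_{n\neq 0}|n|^{-2}\sim|\log\varepsilon|$, and $:|\nabla Y_\varepsilon|^2:$ is the associated Wick square, so that Gaussian hypercontractivity converts these variance computations into $L^p(\Omega;L^p(\T^2))$ moment bounds, and a Kolmogorov/Borel--Cantelli discretization in $\varepsilon$ upgrades them to the almost sure, $\varepsilon$-uniform statements \eqref{Yestnew}--\eqref{Yestnewnew} with the stated powers of $|\log\varepsilon|$. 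The bound \eqref{h1} is the analytic core: conservation of $\|e^{-Y_\varepsilon}v_\varepsilon\|_{L^2}$ (the mass of the underlying solution of \eqref{NLSwhite_tris}) combined with \eqref{Yest} controls $\|v_\varepsilon\|_{L^2}$, while a modified energy --- the Hamiltonian of \eqref{NLSgaugeintronew} corrected so as to absorb the rough contribution of the potential $:|\nabla Y_\varepsilon|^2:$, the error terms being estimated through \eqref{Yestnew}--\eqref{Yestnewnew} --- furnishes the coercive, $\varepsilon$-uniform control of $\|\nabla v_\varepsilon\|_{L^2}$. I would quote all four of these from \cite{TV}.

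Granting this, \eqref{ellreg} is pure elliptic regularity on the torus. On $\T^2$ one has the norm equivalence $\|w\|_{H^2}\sim\|w\|_{L^2}+\|\Delta w\|_{L^2}$, visible on the Fourier side from $(1+|n|^2)^2\sim 1+|n|^4$. Since $\xi$ and hence $Y_\varepsilon$ are real valued, $e^{Y_\varepsilon}$ is positive with $\|e^{Y_\varepsilon}\|_{L^\infty}\le e^{\|Y_\varepsilon\|_{L^\infty}}\le C(\omega)$ by \eqref{Yest}, whence, pointwise in $t$,
\[
\|\Delta v_\varepsilon\|_{L^2}=\|e^{Y_\varepsilon}\,e^{-Y_\varepsilon}\Delta v_\varepsilon\|_{L^2}\le\|e^{Y_\varepsilon}\|_{L^\infty}\,\|e^{-Y_\varepsilon}\Delta v_\varepsilon\|_{L^2}\le C(\omega)\,\|e^{-Y_\varepsilon}\Delta v_\varepsilon\|_{L^2}.
\]
Bounding $\|v_\varepsilon\|_{L^2}\le\|v_\varepsilon\|_{H^1}\le C(\omega)$ by \eqref{h1} and taking the supremum first over $t\in(0,T)$ and then over $\varepsilon\in(0,1)$ yields \eqref{ellreg}; the uniformity in $\varepsilon$ is automatic because both \eqref{Yest} and \eqref{h1} are already $\varepsilon$-uniform.

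Finally, \eqref{h1+} is a one-line interpolation. Writing $1^+=1+\delta$ with $\delta>0$ small and interpolating between $H^1$ and $H^2$,
\[
\|v_\varepsilon\|_{H^{1+\delta}}\le\|v_\varepsilon\|_{H^1}^{1-\delta}\,\|v_\varepsilon\|_{H^2}^{\delta}\le C(\omega)\,\|v_\varepsilon\|_{H^2}^{\delta},
\]
where \eqref{h1} was used to bound $\|v_\varepsilon\|_{H^1}^{1-\delta}\le C(\omega)$ and $\delta=0^+$; the supremum in $t$ and in $\varepsilon$ then gives \eqref{h1+}. I do not expect a genuine obstacle in this section, since all the difficulty --- the renormalization producing the logarithmic losses and, above all, the construction of the modified energy behind \eqref{h1} --- is exactly what is being borrowed from \cite{TV}; the two new estimates merely repackage the imported bounds into the form required by the later dispersive analysis.
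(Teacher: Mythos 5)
Your proposal is correct and follows essentially the same route as the paper: the bounds \eqref{Yest}--\eqref{Yestnewnew} and \eqref{h1} are quoted from \cite{TV}, \eqref{h1+} is obtained by interpolation between $H^1$ and $H^2$ combined with \eqref{h1}, and \eqref{ellreg} follows from elliptic regularity together with \eqref{Yest}. The paper's proof is just a two-line citation of these same ingredients, so your write-up merely supplies the (correct) details it leaves implicit.
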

\begin{proof} The bounds \eqref{Yest}, \eqref{Yestnew}, 
%\eqref{YestNew} and 
\eqref{Yestnewnew} 
have been established in \cite{TV} as well as
\eqref{h1}. The estimate \eqref{h1+} follows by combining interpolation and \eqref{h1},
\eqref{ellreg} follows by combining elliptic regularity with \eqref{Yest}.
\end{proof}
Next we introduce the family of operators:
\begin{equation}\label{Hepsilon}
H_{\varepsilon} u=\Delta u  - 2 \nabla u\cdot \nabla Y_\varepsilon(x) +  u :|\nabla Y_\varepsilon |^2:(x),
\end{equation}
where as usual we drop the $\omega$ dependence of the operators $H_\varepsilon$.
In the sequel we shall need the following result.
\begin{prop}
We have the bound:
\begin{equation}\label{bound1} 
\|(H_\varepsilon -\Delta) u\|_{L^2}\leq C(\omega) |\ln \varepsilon|^C \| u\|_{H^{1^+}}.
\end{equation} 
\end{prop}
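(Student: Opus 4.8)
The plan is to split the operator difference into its two constituent pieces according to \eqref{Hepsilon}, namely
\[
(H_\varepsilon-\Delta)u = -2\,\nabla u\cdot\nabla Y_\varepsilon + u\,:|\nabla Y_\varepsilon|^2:\,,
\]
and to estimate each term separately in $L^2$ via H\"older's inequality together with the stochastic bounds \eqref{Yestnew} and \eqref{Yestnewnew}. The structural reason why one must work in $H^{1^+}$ rather than in $H^1$ is that $\nabla Y_\varepsilon$ is \emph{not} bounded uniformly in $\varepsilon$: by \eqref{Yestnew} it belongs to every $L^p$ with $p<\infty$, with a norm of size $C(\omega)|\log\varepsilon|$, but not to $L^\infty$. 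Hence one cannot pair $\nabla u\in L^2$ with $\nabla Y_\varepsilon\in L^\infty$, and a small gain of integrability on $\nabla u$ is indispensable.

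For the first term I would fix a large finite exponent $q$ and the partner $r$ determined by $1/q+1/r=1/2$, so that $r$ is slightly larger than $2$ once $q$ is large. H\"older then gives
\[
\|\nabla u\cdot\nabla Y_\varepsilon\|_{L^2}\le \|\nabla Y_\varepsilon\|_{L^q}\,\|\nabla u\|_{L^r},
\]
and by \eqref{Yestnew} the first factor is bounded by $C(\omega)|\log\varepsilon|$. For the second factor I would invoke the two-dimensional Sobolev embedding $H^s\hookrightarrow W^{1,r}$, valid on $\T^2$ precisely when $s\ge 2-2/r$; since $2-2/r\to 1^+$ as $r\to 2^+$, the choice $s=1^+$ is admissible, whence $\|\nabla u\|_{L^r}\le C\|u\|_{H^{1^+}}$. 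This produces the bound $C(\omega)|\log\varepsilon|\,\|u\|_{H^{1^+}}$ for the first term.

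The second term is easier, since the derivative has been removed from $u$. Using the embedding $H^{1^+}\hookrightarrow L^\infty(\T^2)$ (valid in dimension two for any Sobolev exponent strictly above $1$) together with \eqref{Yestnewnew} for $p=2$, I would estimate
\[
\|u\,:|\nabla Y_\varepsilon|^2:\|_{L^2}\le \|u\|_{L^\infty}\,\|:|\nabla Y_\varepsilon|^2:\|_{L^2}\le C(\omega)|\log\varepsilon|^2\,\|u\|_{H^{1^+}}.
\]
Collecting the two contributions and absorbing both powers of $|\log\varepsilon|$ into a single factor $|\ln\varepsilon|^C$ yields \eqref{bound1}.

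I expect the only genuine subtlety to reside in the first term: one has to calibrate the pair $(q,r)$ so that $r$ is close enough to $2$ to keep the regularity demand on $u$ exactly at the level $H^{1^+}$, while keeping $q$ finite so that \eqref{Yestnew} still applies. Because $\nabla Y_\varepsilon$ lies in every $L^q$ with $q<\infty$ and the Sobolev threshold $2-2/r$ tends to $1^+$ as $r\to 2^+$, this balance is always achievable. The resulting logarithmic loss is exactly of the harmless type announced in the introduction, to be compensated later by the polynomial-in-$\varepsilon$ convergence of $Y_\varepsilon$ to $Y$.
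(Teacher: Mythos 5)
Your proposal is correct and follows essentially the same route as the paper: split $(H_\varepsilon-\Delta)u$ into the first-order term and the Wick-square term, then apply H\"older with the noise in a high but finite $L^q$ (via \eqref{Yestnew}, \eqref{Yestnewnew}) and a near-$L^2$ Sobolev embedding on $u$, which is exactly what forces the $H^{1^+}$ regularity. The only cosmetic difference is in the second term, where you place $u$ in $L^\infty$ and the renormalized potential in $L^2$, while the paper keeps the symmetric H\"older splitting; both give the same logarithmic loss.
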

\begin{proof}
It is sufficient to show the bounds
\begin{equation}\label{inTE} \|\nabla u  \cdot \nabla Y_\varepsilon\|_{L^2}\leq C(\omega)\| u\|_{H^{1^+}}, \quad \|u :|\nabla Y_\varepsilon |^2:\|_{L^2} \leq C(\omega)\| u\|_{H^{1^+}}.\end{equation}
We have for every $\delta\in (0,1)$
$$\|\nabla u  \cdot \nabla Y_\varepsilon\|_{L^2}\leq C \|\nabla Y_\varepsilon\|_{L^{\frac 2\delta}} \|\nabla u\|_{L^{\frac 2{1-\delta}}}\leq C(\omega)|\ln \varepsilon| 
\|u\|_{H^{1+\delta}}
$$
where we have used \eqref{Yestnew} and the embedding $H^\delta\subset L^{\frac 2{1-\delta}}$.
The second bound in \eqref{inTE} follows by a similar argument
$$
\|u :|\nabla Y_\varepsilon |^2:\|_{L^2}\leq C \|:|\nabla Y_\varepsilon|^2:\|_{L^{\frac 2\delta}} \|u\|_{L^{\frac 2{1-\delta}}}\leq C(\omega) |\ln \varepsilon|^2 \|u\|_{H^{1+\delta}}
$$
where we have used \eqref{Yestnewnew} and $H^{1+\delta}\subset L^{\frac 2{1-\delta}}$.
\end{proof}
%%%%%%%%
\section{
%Strichartz estimates and 
A priori bounds of $v_\varepsilon$}
We introduce the propagator $S_\varepsilon(t)$ associated with the linear problem
$
i\partial_t u = H_{\varepsilon} u,
$
where $H_\varepsilon$ is defined in \eqref{Hepsilon}.
The main point of this section is  Proposition \ref{BF}.  In order to prove it, we shall need Strichartz estimates with loss
for the propagator $S_\varepsilon(t)$.
%%%%
\begin{prop}\label{strich}
For every $T>0$ we have the following bound:
%There is $C\geq 0$ and an event $\Sigma$ of full probability such that for every $\omega\in \Sigma$ and every  $T>0$ there is a finite constant $C(\omega,T)$ such that 
\begin{equation}\label{energy}
\|S_\varepsilon (t) \varphi\|_{L^\infty((0,T);H^s)}\leq C(\omega) |\log \varepsilon |^C \|\varphi\|_{H^s}, \quad s\in [0,2].
\end{equation}
Moreover for every $r,q\in (2,\infty)$ such that $\frac 2r+\frac 2q=1$
we have \begin{equation}\label{strich+}
\|S_\varepsilon(t)\varphi\|_{L^r((0,T);L^q)} \leq C(\omega,T) |\log \varepsilon |^{C} \|\varphi\|_{H^{\frac {1}{r}^+}}.
\end{equation}
\end{prop}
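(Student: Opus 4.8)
The plan is to prove the two bounds separately: the energy estimate \eqref{energy} by a self-adjointness and elliptic-regularity argument, and the Strichartz bound \eqref{strich+} by first establishing the corresponding lossy estimate for the free Schr\"odinger group on $\T^2$ and then transferring it to $S_\varepsilon(t)$ by a Duhamel scheme.

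For \eqref{energy}, I would first record the algebraic identity $H_\varepsilon u = e^{Y_\varepsilon}\Delta(e^{-Y_\varepsilon}u)+(\xi_\varepsilon-C_\varepsilon)u$, which shows that $H_\varepsilon$ is self-adjoint with respect to the weighted measure $e^{-2Y_\varepsilon}\,dx$; since $\|Y_\varepsilon\|_{L^\infty}\le C(\omega)$ by \eqref{Yest}, this measure is comparable to Lebesgue measure, so $S_\varepsilon(t)$ is an isometry on $L^2(e^{-2Y_\varepsilon}dx)$ and the case $s=0$ of \eqref{energy} holds with no $\varepsilon$-loss. For $s=2$ I would use that $H_\varepsilon$ commutes with its propagator, so that $\|H_\varepsilon S_\varepsilon(t)\varphi\|_{L^2}\le C(\omega)\|H_\varepsilon\varphi\|_{L^2}$, and then convert this into an $H^2$ bound by elliptic regularity: writing $\Delta w=H_\varepsilon w-(H_\varepsilon-\Delta)w$, invoking \eqref{bound1}, and using the interpolation inequality $\|w\|_{H^{1^+}}\le\eta\|w\|_{H^2}+C_\eta\|w\|_{L^2}$ with $\eta\sim|\log\varepsilon|^{-C}$ to absorb the lower-order term; this choice of $\eta$ is exactly what produces the factor $|\log\varepsilon|^C$. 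Interpolating between $s=0$ and $s=2$ then gives the full range $s\in[0,2]$.

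For \eqref{strich+}, the first step is the free estimate $\|e^{it\Delta}\psi\|_{L^r((0,T);L^q)}\le C(T)\|\psi\|_{H^{\frac1r^+}}$ on $\T^2$. This I would deduce from Bourgain's periodic $L^4_{t,x}$ bound $\|e^{it\Delta}\psi\|_{L^4}\le C(T)\|\psi\|_{H^{0^+}}$: for $r\ge4$ by interpolating it with the trivial $L^\infty_t L^2_x$ conservation, and for $r\in(2,4)$ by the Sobolev embedding $W^{\frac2r-\frac12,4}\hookrightarrow L^q$ in the space variable combined with $L^4_t\hookrightarrow L^r_t$ on the finite interval, using that $\frac2r-\frac12<\frac1r$. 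The second step is to pass to $S_\varepsilon(t)$ through the Duhamel formula $S_\varepsilon(t)\varphi=e^{it\Delta}\varphi-i\int_0^t e^{i(t-\tau)\Delta}(H_\varepsilon-\Delta)S_\varepsilon(\tau)\varphi\,d\tau$, applying the free estimate to the homogeneous term and a retarded inhomogeneous Strichartz estimate to the forcing, controlling the coefficients by H\"older via \eqref{Yestnew} and \eqref{Yestnewnew} (both in $L^p$ for all finite $p$, at the price of a power of $|\log\varepsilon|$) together with the energy estimate \eqref{energy}. A Gronwall or bootstrap argument in the space-time norm over $[0,T]$ then closes the bound, and the accumulated logarithmic factors are absorbed into $|\log\varepsilon|^C$.

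I expect the main obstacle to be the first-order term $-2\nabla Y_\varepsilon\cdot\nabla u$ in $H_\varepsilon-\Delta$: it loses one derivative, so a naive use of Duhamel would force the datum into $H^{1+\frac1r^+}$ rather than $H^{\frac1r^+}$. Recovering this derivative is the crux and is where genuine dispersion must enter, for instance by writing the term in divergence form and transferring the derivative onto the propagator, exploiting the smoothing built into the inhomogeneous Strichartz estimate and the freedom to take the H\"older exponent on $\nabla Y_\varepsilon$ arbitrarily large. Keeping the datum regularity at exactly $\frac1r^+$ while paying only logarithmic powers of $\varepsilon$ is the delicate point of the whole proposition.
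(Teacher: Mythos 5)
Your treatment of \eqref{energy} is acceptable: the paper simply cites \cite{DW} for this bound, and your reconstruction (self-adjointness of $H_\varepsilon$ with respect to $e^{-2Y_\varepsilon}dx$ for $s=0$, conservation of $\|H_\varepsilon\cdot\|_{L^2}$ plus elliptic regularity via \eqref{bound1} and an interpolation absorption with $\eta\sim|\log\varepsilon|^{-C}$ for $s=2$, then interpolation) is essentially the argument behind that citation. The free lossy estimate $\|e^{it\Delta}\psi\|_{L^rL^q}\lesssim\|\psi\|_{H^{1/r^+}}$ is also fine. The problem is the transfer to $S_\varepsilon(t)$.

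There is a genuine gap in your proof of \eqref{strich+}: you correctly identify that the first-order term $-2\nabla Y_\varepsilon\cdot\nabla u$ loses one derivative and would push the datum to $H^{1+\frac1r^+}$, but you do not supply a working mechanism to recover it, and the two you suggest both fail. Writing the term in divergence form gives $\nabla\cdot(u\nabla Y_\varepsilon)-u\,\Delta Y_\varepsilon=\nabla\cdot(u\nabla Y_\varepsilon)-u\,\xi_\varepsilon$, and $\xi_\varepsilon$ diverges \emph{polynomially} in $\varepsilon$ in any $L^p$, which destroys the logarithmic bookkeeping; moreover the retarded Strichartz estimate on $\T^2$ (or any compact manifold) has no local smoothing gain, so $e^{i(t-\tau)\Delta}\nabla$ in the Duhamel term does not absorb the derivative, and a Gronwall/bootstrap over all of $[0,T]$ cannot close. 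The paper's resolution is different and is the actual content of the proof: after a double Littlewood--Paley decomposition $\Delta_{N_1}S_\varepsilon(t)\Delta_{N_2}$ (needed because $H_\varepsilon-\Delta$ does not preserve frequency localization), one partitions $[0,T]$ into $\sim T\max(N_1,N_2)$ intervals of length $\max(N_1,N_2)^{-1}$, applies the semiclassical Strichartz estimate of \cite{BGT} and the Duhamel formula on each short interval with the perturbation placed in $L^1_tL^2_x$ via \eqref{bound1} and \eqref{energy}; the factor $|I_j|\sim N^{-1}$ exactly cancels the derivative lost to $H_\varepsilon-\Delta$, and summing the $r$-th powers over the intervals produces precisely the $H^{\frac1r^+}$ loss on the datum. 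Without this frequency-dependent time partition (or an equivalent device), your scheme does not close.
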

%%%

\begin{proof}
Estimate \eqref{energy} is established in \cite{DW}. For the proof of \eqref{strich+}, we follow the argument of \cite{MZ} which is closely related to the analysis in \cite{BGT,KT,ST,T}.
The basic strategy is to perform a perturbative argument with respect to the evolution $\exp(it\Delta)$ by a partition on small time intervals which makes the perturbation 
$H_\varepsilon - \Delta$ better but which losses some regularity on the data because of the summation on the small time intervals. 
An additional  difficulty resolved in \cite{MZ} is coming from the fact that a frequency localisation of  $(H_\varepsilon- \Delta)(u)$  does not imply a frequency localisation of $u$.
\\

Let 
$$
{\rm Id}=\sum_{N-{\rm dyadic}}\Delta_N
$$
be a Littlewood-Paley partition of the unity.  Therefore the issue is to bound 
\begin{equation}\label{object}
\|\Delta_{N_1}S_\varepsilon(t)\Delta_{N_2}\varphi\|_{L^r( (0,T)  ;L^q )}\, .
\end{equation}
In order to evaluate \eqref{object}, we distinguish two cases according to the sizes of $N_1$ and $N_2$
and to sum up on $N_1, N_2$.
\\
\\
{\em First case: $N_1\geq N_2$}\\
\\
In this case we split the interval $[0,T]$ in an essentially disjoint union of intervals of size $ N_1^{-1}$ as
\begin{equation}\label{spli}
[0,T]=\bigcup_j I_j
\end{equation}
and we aim to estimate 
$
\|\Delta_{N_1}S_\varepsilon(t)\Delta_{N_2}\varphi\|_{L^r(I_j;L^q )}\, .
$
Suppose that $I_j=[a,b]$. Then following \cite{MZ} (see also \cite{JSS}), for $t\in [a,b]$ we can write 
\begin{multline}\label{decomposition}
\Delta_{N_1}S_\varepsilon(t)\Delta_{N_2}\varphi
=
\Delta_{N_1} e^{i(t-a)\Delta} S_{\varepsilon}(a)\Delta_{N_2}\varphi
\\
+
i\int_a^t \Delta_{N_1} e^{i(t-\tau)\Delta} (H_\varepsilon -\Delta) S_{\varepsilon}(\tau) \Delta_{N_2}\varphi d\tau.
\end{multline}
We now estimate each term in the right hand-side of \eqref{decomposition}.  Using \cite{BGT}, we estimate the first term as follows for $\delta>0$:
\begin{multline*}
\|\Delta_{N_1} e^{i(t-a)\Delta} S_{\varepsilon}(a)\Delta_{N_2}\varphi\|_{L^r(I_j;L^q)}
\leq CN_1^{-\frac{1}{r}-\delta}
\|S_{\varepsilon}(a)\Delta_{N_2}\varphi\|_{H^{\frac{1}{r}+\delta}}\\\leq 
 C(\omega) |\log \varepsilon |^C N_1^{-\frac{1}{r}-\delta} \|\varphi\|_{H^{\frac{1}{r}+\delta}}
\end{multline*}
where we have used \eqref{energy}.
Now we estimate the second term in the right hand-side of \eqref{decomposition}.
Using the Minkowski inequality and  \cite{BGT}, we can write for every $\delta>0$:
\begin{multline*}
\Big \|\int_a^t \Delta_{N_1} e^{i(t-\tau)\Delta} (H_\varepsilon -\Delta) S_{\varepsilon}(\tau) \Delta_{N_2}\varphi d\tau \Big \|_{L^r(I_j;L^q)}
\\
\leq C
\int_{I_j}
\|
(H_\varepsilon -\Delta) S_{\varepsilon}(\tau) \Delta_{N_2}\varphi\|_{L^2} d\tau\\
%Next notice that by using  \cite{TV} and \cite{GKO}, we have the bound 
%\begin{equation}\label{bound1}
%\|(H_\varepsilon -\Delta) u\|_{H^{-\delta}(\T^2)}\leq C(\omega)\| u\|_{H^{1+\delta}(\T^2)},
%\end{equation}
%where $\delta>0$ and $ C(\omega)$ is an a.s. finite constant. 
\leq 
C(\omega) |\log \varepsilon |^C
N_1^{-1}
N_2^{1+ \frac\delta 2 }
N_2^{-\frac{1}{r}-\delta}
 \|\varphi\|_{H^{\frac{1}{r}+\delta}}
\end{multline*}
where we have used \eqref{bound1} and  \eqref{energy}.
Summarizing we get
$$\|\Delta_{N_1}S_\varepsilon(t)\Delta_{N_2}\varphi\|_{L^r(I_j;L^q)}\leq 
C(\omega) |\log \varepsilon |^C \big( N_1^{-\frac{1}{r}-\delta} +N_1^{-1}
N_2^{1-\frac 1r-\frac \delta 2}\big )
 \|\varphi\|_{H^{\frac{1}{r}+\delta}} 
$$
and hence using that the number of $I_j$ is smaller than $TN_1$ taking the $r$'th power of the previous bound and summing on $j$, we get the estimate 
$$\|\Delta_{N_1}S_\varepsilon(t)\Delta_{N_2}\varphi\|_{L^r((0,T);L^q)}\leq C(\omega) T^\frac 1r |\log \varepsilon |^C \big( N_1^{-\delta} +N_1^{-1+\frac 1r}
N_2^{1-\frac 1r-\frac \delta 2}\big )
 \|\varphi\|_{H^{\frac{1}{r}+\delta}}$$
and hence 
\begin{equation}\label{111}
\sum_{N_2\leq N_1} \|\Delta_{N_1}S_\varepsilon(t)\Delta_{N_2}\varphi\|_{L^r((0,T);L^q)}\leq C(\omega) T^\frac 1r|\log \varepsilon |^C  \|\varphi\|_{H^{\frac{1}{r}+\delta}}
\end{equation}
where we have used
$$\sum_{N_2\leq N_1} \big( N_1^{-\delta} +N_1^{-1+\frac 1r}
N_2^{1-\frac 1r-\frac \delta 2}\big )
<\infty.$$
{\em Second case: $N_1\leq N_2$}\\
\\
 We consider again the splitting \eqref{spli} but this time the intervals $I_j$ are of size $ N_2^{-1}$.
Again we consider \eqref{decomposition} and we estimate each term of the right hand-side. 
Since $ N_2^{-1}\leq N_1^{-1}$,  using \cite{BGT} and \eqref{energy}, we estimate the first term at the  right hand-side of \eqref{decomposition} as 
$$
\|\Delta_{N_1} e^{i(t-a)\Delta} S_{\varepsilon}(a)\Delta_{N_2}\varphi\|_{L^r(I_j;L^q)}
\leq  C(\omega) |\log \varepsilon |^CN_2^{-\frac{1}{r}-\delta}  \|\varphi\|_{H^{\frac{1}{r}+\delta}}\, ,
$$
where $\delta>0$.
Next,  as above, we can estimate the second term at the  right hand-side of \eqref{decomposition} as 
\begin{multline*}
\Big \|\int_a^t \Delta_{N_1} e^{i(t-\tau)\Delta} (H_\varepsilon -\Delta) S_{\varepsilon}(\tau) \Delta_{N_2}\varphi d\tau\Big \|_{L^r(I_j;L^q)}
\\
\leq 
C(\omega) |\log \varepsilon |^C
N_2^{-1}
N_2^{1+\frac \delta 2}
N_2^{-\frac{1}{r}-\delta}
 \|\varphi\|_{H^{\frac{1}{r}+\delta}}\, .
\end{multline*}
Summarizing we get
$$\|\Delta_{N_1}S_\varepsilon(t)\Delta_{N_2}\varphi\|_{L^r(I_j;L^q)}\leq 
C(\omega) |\log \varepsilon |^C \big( N_2^{-\frac{1}{r}-\delta} + 
N_2^{-\frac{1}{r}-\frac \delta 2} \big)  \|\varphi\|_{H^{\frac{1}{r}+\delta}}$$
and as above, using that the number of $I_j$ is smaller than $TN_2$ taking the $r$'th power of the previous bound and summing on $j$, we get the estimate 
$$
\|\Delta_{N_1}S_\varepsilon(t)\Delta_{N_2}\varphi\|_{L^r((0,T);L^q)}
\leq 
C(\omega) T^\frac 1r |\log \varepsilon |^C \big( N_2^{-\delta} +
N_2^{-\frac \delta 2} \big)
\|\varphi\|_{H^{\frac{1}{r}+\delta}}.
$$
Hence we get
\begin{equation}\label{222}\sum_{N_1\leq N_2} \|\Delta_{N_1}S_\varepsilon(t)\Delta_{N_2}\varphi\|_{L^r((0,T);L^q)}
\leq 
C(\omega) T^\frac 1r |\log \varepsilon |^C \|\varphi\|_{H^{\frac{1}{r}+\delta}}
\end{equation}
since $$\sum_{N_1\leq N_2} \big( N_2^{-\delta} +
N_2^{-\frac \delta 2} \big)
<\infty.$$
We conclude by combining \eqref{111} and \eqref{222} with the Minkowski inequality.
\end{proof}
%%%%%%%%%%%
As a consequence we get the following result.
\begin{prop}\label{strichcor}
For every $T>0$ we have the following estimates:
\begin{equation}\label{noDuhamel}
\|S_\varepsilon(t)\varphi\|_{L^4((0,T);W^{\frac 34^-,4})}\leq C(\omega,T) |\log \varepsilon|^C \|\varphi\|_{H^1}
\end{equation}
and
\begin{equation}\label{Duhamel}
\Big \|\int_0^t S_\varepsilon(t-s) f(s) ds\Big \|_{L^4((0,T);W^{\frac 34^-,4})}\leq C(\omega,T)  |\log \varepsilon|^C \|f\|_{L^1((0,T);H^1)}\, .
\end{equation}
\end{prop}
%%%%%%%%%%
\begin{proof}
Notice that \eqref{Duhamel} follows by combining \eqref{noDuhamel} with the Minkowski inequality.
Next we focus on the proof of \eqref{noDuhamel}.
Notice that for every $\varepsilon_0\in (0,1)$, there exists $q\in (1, \infty)$ such that 
the following Gagliardo-Nirenberg inequality occurs:
$$\|u\|_{W^{\frac 34-\varepsilon_0,4}}\leq C \|u\|_{L^q}^{\frac 12} \|u\|_{H^\frac 32}^{\frac 12}$$
and hence by integration in time and H\"older inequality in time we get
\begin{multline*}\|S_\varepsilon(t)\varphi\|_{L^4((0,T);W^{\frac 34-\varepsilon_0,4})}^4\leq C \|S_\varepsilon(t)\varphi\|_{L^{2}((0,T);L^q)}^{2} \|S_\varepsilon(t)\varphi\|_{L^\infty((0,T);H^\frac 32)}^{2}
\\\leq C(\omega) |\log \varepsilon|^C
 \|S_\varepsilon(t)\varphi\|_{L^{r}((0,T);L^q)}^{2} \|\varphi\|_{H^\frac 32}^{2}\leq C(\omega,T)|\log \varepsilon|^C \|\varphi\|_{H^{\frac 12^-}}^{2}
\|\varphi\|_{H^\frac 32}^{2}\end{multline*}
where $q,r$ are Strichartz admissible and we have used  \eqref{energy}, \eqref{strich+}.

Notice that for initial datum $\varphi=\Delta_N \varphi$ which is spectrally localize at dyadic frequency $N$ we get from the previous bound
$$\|S_\varepsilon(t)\Delta_N \varphi\|_{L^4((0,T);W^{\frac 34-\varepsilon_0,4})} \leq C(\omega,T) |\log \varepsilon|^C \|\Delta_N \varphi\|_{H^{1^-}}.
$$
We conclude \eqref{noDuhamel} by summing on $N$.
\end{proof}

As a consequence we get the following result.
\begin{prop}\label{strichcor}
For every $T>0$ we have the following estimates:
\begin{equation}\label{noDuhamel}
\|S_\varepsilon(t)\varphi\|_{L^4((0,T);W^{\frac 34^-,4})}\leq C(\omega,T) |\log \varepsilon|^C \|\varphi\|_{H^1}
\end{equation}
and
\begin{equation}\label{Duhamel}
\Big \|\int_0^t S_\varepsilon(t-s) f(s) ds\Big \|_{L^4((0,T);W^{\frac 34^-,4})}\leq C(\omega,T)  |\log \varepsilon|^C \|f\|_{L^1((0,T);H^1)}\, .
\end{equation}
\end{prop}
%%%%%%%%%%
\begin{proof}
Notice that \eqref{Duhamel} follows by combining \eqref{noDuhamel} with the Minkowski inequality.
Next we focus on the proof of \eqref{noDuhamel}.
Notice that for every $\varepsilon_0\in (0,1)$, there exists $q\in (1, \infty)$ such that 
the following Gagliardo-Nirenberg inequality occurs:
$$\|u\|_{W^{\frac 34-\varepsilon_0,4}}\leq C \|u\|_{L^q}^{\frac 12} \|u\|_{H^\frac 32}^{\frac 12}$$
and hence by integration in time and H\"older inequality in time we get
\begin{multline*}\|S_\varepsilon(t)\varphi\|_{L^4((0,T);W^{\frac 34-\varepsilon_0,4})}^4\leq C \|S_\varepsilon(t)\varphi\|_{L^{2}((0,T);L^q)}^{2} \|S_\varepsilon(t)\varphi\|_{L^\infty((0,T);H^\frac 32)}^{2}
\\\leq C(\omega) |\log \varepsilon|^C
 \|S_\varepsilon(t)\varphi\|_{L^{r}((0,T);L^q)}^{2} \|\varphi\|_{H^\frac 32}^{2}\leq C(\omega,T)|\log \varepsilon|^C \|\varphi\|_{H^{\frac 12^-}}^{2}
\|\varphi\|_{H^\frac 32}^{2}\end{multline*}
where $q,r$ are Strichartz admissible and we have used  \eqref{energy}, \eqref{strich+}.

Notice that for initial datum $\varphi=\Delta_N \varphi$ which is spectrally localize at dyadic frequency $N$ we get from the previous bound
$$\|S_\varepsilon(t)\Delta_N \varphi\|_{L^4((0,T);W^{\frac 34-\varepsilon_0,4})} \leq C(\omega,T) |\log \varepsilon|^C \|\Delta_N \varphi\|_{H^{1^-}}.
$$
We conclude \eqref{noDuhamel} by summing on $N$.
\end{proof}
Next we  get the following bound on the nonlinear solutions $v_\varepsilon$ to \eqref{NLSgaugeintronew}.
\begin{prop}\label{BF}
For every $T>0$ we have the following bound:
\begin{equation}
\|v_\varepsilon(t,x)\|_{L^4((0,T);W^{\frac 34^-,4})}\leq C(\omega,T) |\log \varepsilon|^C (1+ \|v_\varepsilon(t,x)\|_{L^\infty((0,T);H^2)}^{0^+}).
\end{equation}
\end{prop}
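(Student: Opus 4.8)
The plan is to run a Duhamel/Strichartz argument built on Proposition~\ref{strichcor}, reserving the dispersive mapping bounds for the linear flow and treating the power nonlinearity with crude energy estimates. Writing $H_\varepsilon$ as in \eqref{Hepsilon} and setting $N(v_\varepsilon):=e^{-pY_\varepsilon}v_\varepsilon|v_\varepsilon|^p$, equation \eqref{NLSgaugeintronew} reads $i\partial_t v_\varepsilon=H_\varepsilon v_\varepsilon-N(v_\varepsilon)$, so by Duhamel's formula
$$v_\varepsilon(t)=S_\varepsilon(t)v_0+i\int_0^t S_\varepsilon(t-s)N(v_\varepsilon)(s)\,ds.$$
Applying \eqref{noDuhamel} to the free term and \eqref{Duhamel} to the integral term, I would obtain
$$\|v_\varepsilon\|_{L^4((0,T);W^{\frac34^-,4})}\leq C(\omega,T)|\log\varepsilon|^C\big(\|v_0\|_{H^1}+\|N(v_\varepsilon)\|_{L^1((0,T);H^1)}\big).$$
Since $v_0=e^{Y}u_0\in H^2$, the first term is a harmless $C(\omega)$ which can be folded into the ``$1$'', so everything reduces to estimating the nonlinearity in $L^1((0,T);H^1)$.

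For the pointwise-in-time $H^1$ bound of $N(v_\varepsilon)$ I would use $\|e^{-pY_\varepsilon}\|_{L^\infty}\leq C(\omega)$ from \eqref{Yest} together with the product rule $\nabla N(v_\varepsilon)=e^{-pY_\varepsilon}\big(-p\,\nabla Y_\varepsilon\,v_\varepsilon|v_\varepsilon|^p+\nabla(v_\varepsilon|v_\varepsilon|^p)\big)$. The genuinely nonlinear gradient satisfies $|\nabla(v_\varepsilon|v_\varepsilon|^p)|\lesssim |v_\varepsilon|^p|\nabla v_\varepsilon|$, which is bounded in $L^2$ by $\|v_\varepsilon\|_{L^\infty}^p\|v_\varepsilon\|_{H^1}$; the potential term is handled by Hölder with a small $\delta$, writing $\|\nabla Y_\varepsilon\,v_\varepsilon|v_\varepsilon|^p\|_{L^2}\leq\|\nabla Y_\varepsilon\|_{L^{2/\delta}}\|v_\varepsilon\|_{L^\infty}^p\|v_\varepsilon\|_{L^{2/(1-\delta)}}$ and invoking \eqref{Yestnew} together with the $2d$ embedding $H^1\subset L^{2/(1-\delta)}$. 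Using the a priori bound \eqref{h1}, namely $\|v_\varepsilon\|_{L^\infty((0,T);H^1)}\leq C(\omega)$, these collapse to $\|N(v_\varepsilon)(t)\|_{H^1}\leq C(\omega)|\log\varepsilon|\,\|v_\varepsilon(t)\|_{L^\infty}^p$.

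It then remains to control $\int_0^T\|v_\varepsilon(t)\|_{L^\infty}^p\,dt$. The key point is that I would \emph{not} feed back the dispersive norm here: doing so would produce a factor $\|v_\varepsilon\|_{L^4((0,T);W^{\frac34^-,4})}^4$ on the right and render the estimate circular, and in any case it fails for $p>4$ where $L^4_t$ control is insufficient. Instead I would bound, uniformly in time, $\|v_\varepsilon(t)\|_{L^\infty}\lesssim\|v_\varepsilon(t)\|_{H^{1^+}}\leq C(\omega)\|v_\varepsilon\|_{L^\infty((0,T);H^2)}^{0^+}$, using the $2d$ embedding $H^{1^+}\subset L^\infty$ and the interpolation bound \eqref{h1+}. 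Then $\int_0^T\|v_\varepsilon\|_{L^\infty}^p\,dt\leq C(\omega,T)\|v_\varepsilon\|_{L^\infty((0,T);H^2)}^{p\cdot 0^+}$, and since any fixed power of a $0^+$ quantity is again an arbitrarily small positive exponent $0^+$, this gives $\|N(v_\varepsilon)\|_{L^1((0,T);H^1)}\leq C(\omega,T)|\log\varepsilon|\,\|v_\varepsilon\|_{L^\infty((0,T);H^2)}^{0^+}$; inserting this into the displayed Duhamel inequality yields the claim.

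The main obstacle is conceptual rather than computational: one must resist closing the nonlinear term with the very norm being estimated, and recognize that the role of dispersion here is only to place the linear propagator into $L^4((0,T);W^{\frac34^-,4})$ through Proposition~\ref{strichcor}, while the power nonlinearity is absorbed by $L^\infty_t$ energy control at the negligible cost $\|v_\varepsilon\|_{L^\infty((0,T);H^2)}^{0^+}$. The only genuine technical care needed is the Hölder splitting of the $\nabla Y_\varepsilon$ factor, so as to trade its logarithmic divergence against the $2d$ subcritical embeddings, and the bookkeeping that multiplying a $0^+$ exponent by the fixed power $p$ leaves it a $0^+$ exponent.
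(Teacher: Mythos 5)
Your proposal is correct and follows essentially the same route as the paper: Duhamel's formula combined with Proposition~\ref{strichcor}, the Leibniz rule to bound $\|e^{-pY_\varepsilon}v_\varepsilon|v_\varepsilon|^p\|_{H^1}$ (with the $\nabla Y_\varepsilon$ term controlled via \eqref{Yestnew}), and then \eqref{Yest}, \eqref{h1}, \eqref{h1+} together with $H^{1^+}\subset L^\infty$ to convert $\|v_\varepsilon\|_{L^\infty}^p$ into the harmless factor $\|v_\varepsilon\|_{L^\infty((0,T);H^2)}^{0^+}$. The only cosmetic difference is that the paper places $\nabla Y_\varepsilon$ in $L^2$ against $\|v_\varepsilon\|_{L^\infty}^{p+1}$ rather than using your $L^{2/\delta}$--$L^{2/(1-\delta)}$ Hölder splitting; both are equally valid.
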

\begin{proof}
By combining Proposition \ref{strichcor} with the integral formulation associated with \eqref{NLSgaugeintronew}
we get:
\begin{multline*}\|v_\varepsilon\|_{L^4((0,T);W^{\frac 34^-,4})}\leq C(\omega,T)|\log \varepsilon|^C \|v_\varepsilon(0)\|_{H^1} 
+ C(\omega,T) |\log \varepsilon|^C  \int_0^T \|e^{-pY_\varepsilon} v_\varepsilon |v_\varepsilon|^p\|_{H^1}\\
\leq C(\omega,T) |\log \varepsilon|^C \|v_\varepsilon(0)\|_{H^1} 
+  C(\omega,T) |\log \varepsilon|^C  \int_0^T \|v_\varepsilon\|_{H^1} \|v_\varepsilon\|_{L^\infty}^p \|e^{-pY_\varepsilon}\|_{L^\infty}
\\+ C(\omega,T)|\log \varepsilon|^C  \int_0^T \|\nabla Y_\varepsilon\|_{L^2}  \|e^{-pY_\varepsilon}\|_{L^\infty} \|v_\varepsilon\|_{L^\infty}^{p+1}
\end{multline*}
and we conclude by using the Sobolev embedding $H^{1^+}\subset L^\infty$, \eqref{Yest}, \eqref{Yestnew}, \eqref{h1}, \eqref{h1+}.
\end{proof}
We conclude this section with the following key estimate.
\begin{prop}\label{maiM}
We have the following bound for a suitable $\eta\in(0,1)$ and for every $T>0$:
$$\|v_\varepsilon(t,x)\|_{L^2((0,T);W^{1,4})}^2\leq C(\omega,T) |\log \varepsilon|^C (1+\|v_\varepsilon(t,x)\|_{L^\infty((0,T);H^2)}^\eta).$$
\end{prop}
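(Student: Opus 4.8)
The plan is to obtain the $L^2_tW^{1,4}$ bound by interpolating the two a priori controls already at our disposal. On one side stands the Strichartz-type bound of Proposition~\ref{BF},
\[
\|v_\varepsilon\|_{L^4((0,T);W^{\frac34^-,4})}\le C(\omega,T)|\log\varepsilon|^C\big(1+\|v_\varepsilon\|_{L^\infty((0,T);H^2)}^{0^+}\big),
\]
in which the $H^2$ norm enters only with the harmless power $0^+$. On the other side stands an elementary energy bound coming from the critical two-dimensional Sobolev embedding $H^2(\T^2)\subset W^{\frac32,4}(\T^2)$ (same scaling, $2-1=\frac32-\frac12$), which after a H\"older estimate in time on the finite interval $[0,T]$ gives, for any finite $p$,
\[
\|v_\varepsilon\|_{L^p((0,T);W^{\frac32,4})}\le T^{\frac1p}\|v_\varepsilon\|_{L^\infty((0,T);W^{\frac32,4})}\le C T^{\frac1p}\|v_\varepsilon\|_{L^\infty((0,T);H^2)},
\]
in which the $H^2$ norm enters with the full power $1$. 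The key structural observation is that $L^2_t$ sits \emph{below} $L^4_t$ in time integrability, so reaching it by interpolation forces us to spend time integrability on the second factor; this is precisely where the time averaging gain advertised in the introduction enters, because on a finite interval an $L^{p}_t$ norm with $p$ close to $1$ is cheap, costing only a power of $T$.

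Concretely, I would interpolate (complex interpolation of the Bochner-Sobolev spaces $L^r_tW^{s,4}$, which identifies $[L^{r_0}_tW^{s_0,4},L^{r_1}_tW^{s_1,4}]_\theta=L^r_tW^{s,4}$ with $\frac1r=\frac{1-\theta}{r_0}+\frac{\theta}{r_1}$ and $s=(1-\theta)s_0+\theta s_1$) the space $L^4_tW^{\frac34^-,4}$ against $L^{p}_tW^{\frac32,4}$ with a weight $\theta$. Balancing the smoothness forces $s=(1-\theta)\frac34+\theta\frac32\ge 1$, i.e. $\theta\ge\frac13$, while balancing the time integrability to land at $r=2$ forces $p$ finite and close to $1$. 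At the idealized value $\theta=\frac13$, $p=1$ this produces exactly $L^2_tW^{1,4}$ with the second, energy, factor weighted by $\theta=\frac13$.

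To conclude I would then apply the two displayed bounds to the two factors, use $W^{s,4}\subset W^{1,4}$ for $s\ge 1$, multiply, and square. Since the first factor contributes $(1-\theta)\,0^+$ and the second contributes $\theta$ to the exponent of $\|v_\varepsilon\|_{L^\infty((0,T);H^2)}$, the squared bound carries the power $\eta=2\theta+0^+$, which is strictly less than $1$, while all the prefactors recombine into the announced $C(\omega,T)|\log\varepsilon|^C$.

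The delicate point is the borderline matching at $(r,s)=(2,1)$. Because Proposition~\ref{strichcor}, and hence Proposition~\ref{BF}, only reaches $W^{\frac34^-,4}$ and not $W^{\frac34,4}$, the idealized choice $\theta=\frac13$ would produce $W^{1^-,4}$, which is weaker than the target $W^{1,4}$ and therefore useless. I would instead overshoot slightly, taking $\theta=\frac13+O(\delta)$, where $\delta>0$ measures the loss $\frac34-\delta$ in the Strichartz exponent, together with $p$ slightly above $1$, so that the interpolated smoothness satisfies $s>1$ and the interpolated time exponent is exactly $2$. One then checks that for $\delta$ small these perturbations keep $\theta<\frac12$, so that $\eta=2\theta+0^+<1$, and keep $p<\infty$, so that the finite-interval H\"older step remains licit. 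Arranging this slack is the main bookkeeping obstacle; everything else is a direct combination of Proposition~\ref{BF}, the Sobolev embedding, and interpolation.
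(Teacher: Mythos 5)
Your proposal is correct and follows essentially the same route as the paper: the paper's proof is precisely the spatial interpolation inequality $\|u\|_{W^{1,4}}\leq C\|u\|_{W^{\frac34^-,4}}^{\frac23^-}\|u\|_{H^2}^{\frac13^+}$ (your $\theta=\frac13^+$, using $H^2\subset W^{\frac32,4}$) applied pointwise in time, followed by H\"older in time to pass from $L^{\frac43^-}_t$ to $L^4_t$ and an application of Proposition~\ref{BF}, yielding the same exponent $\eta=\frac23^+<1$. Your packaging of the two steps as a single complex interpolation of Bochner--Sobolev spaces, and your handling of the $\frac34^-$ slack, are equivalent to the paper's bookkeeping.
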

%%%
\begin{proof}
We have the bound for time independent functions:
$$\|u\|_{W^{1,4}}\leq C \|u\|_{W^{\frac 34^-, 4}}^{\frac 23^-} \|u\|_{H^2}^{\frac 13^+}.$$
Hence by integration in time and by choosing $u=v_\varepsilon$ we get
$$\|v_\varepsilon\|_{L^2((0,T);W^{1,4})}^2\leq C T\|v_\varepsilon\|_{L^{\frac 43^-}((0,T);W^{\frac 34^-, 4})}^{\frac 43^-} \|v_\varepsilon\|_{L^\infty((0,T);H^2)}^{\frac 23^+}
$$$$
\leq C T
\|v_\varepsilon\|_{L^4((0,T);W^{\frac 34^-, 4})}^{\frac 43^-} \|v_\varepsilon\|_{L^\infty((0,T);H^2)}^{\frac 23^+}.
$$
We conclude by Proposition \ref{BF}.
\end{proof}
\section{Proof of Theorem \ref{main}}
We aim at proving the following bound for every given $T>0$:
\begin{equation}\label{key}\|v_\varepsilon(t,x)\|_{L^\infty ((0,T);H^2)}\leq |\log \varepsilon|^{C(\omega,T)},\,\,  \forall \varepsilon\in(0,\frac 12).
\end{equation}
Recall that the bound \eqref{key}  has been achieved in \cite{TV} in the case $2\leq p\leq 3$ (see Proposition 4.5 in \cite{TV}). 
The main point is that we get the bound   \eqref{key} for every $p\geq 2$. Once \eqref{key} is achieved then Theorem \ref{main} can be proved exactly as in \cite[Section 5]{TV}.
\\

We can now establish \eqref{key}. In order to do that we recall some notations from \cite{TV}.
Denote by ${\mathcal H}_\varepsilon, {\mathcal F}_\varepsilon$ and ${\mathcal G}_\varepsilon$ the energies introduced along \cite[Proposition 4.1]{TV} which satisfy 
\begin{equation}\label{iaae}
\frac d{dt} ({\mathcal F}_\varepsilon(v_\varepsilon)-{\mathcal G}_\varepsilon(v_\varepsilon))=-{\mathcal H}_\varepsilon (v_\varepsilon).
\end{equation}
An important point is to obtain  the following modification of \cite[Proposition 4.3]{TV} which gains on the power of  $\|e^{-Y_\varepsilon} \Delta v_\varepsilon\|_{L^\infty((0,T); L^2)}$ 
appearing in the right hand-side by exploiting the averaging in the time variable.
\begin{prop}\label{firstcor} For a suitable $\gamma\in (1,2)$ we have the bound:
\begin{equation*}
\int_0^T |{\mathcal H}_\varepsilon(v_\varepsilon(s))|ds\leq C(\omega,T)|\log \varepsilon|^C + \|e^{-Y_\varepsilon} \Delta v_\varepsilon\|_{L^\infty((0,T); L^2)}^\gamma \, .
\end{equation*}
\end{prop}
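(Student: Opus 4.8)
The plan is to return to the explicit expression of ${\mathcal H}_\varepsilon(v_\varepsilon)$ recorded in \cite[Proposition 4.1]{TV} and to sharpen the estimate of \cite[Proposition 4.3]{TV}, whose conclusion had the same shape but with a power of $\|e^{-Y_\varepsilon}\Delta v_\varepsilon\|_{L^\infty((0,T);L^2)}$ that remains below $2$ only in the range $p\in[2,3]$. Recall that ${\mathcal H}_\varepsilon(v_\varepsilon)$ is a finite sum of spatial integrals, each multilinear in $v_\varepsilon$, its derivatives up to second order, the weight $e^{-pY_\varepsilon}$ and the renormalized potentials $\nabla Y_\varepsilon$ and $:|\nabla Y_\varepsilon|^2:$. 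First I would isolate the harmless terms, namely those that after integration on $[0,T]$ are bounded by $C(\omega,T)|\log\varepsilon|^C$ using only the uniform $H^1$ control \eqref{h1}, the interpolation bound \eqref{h1+}, the embedding $H^{1^+}\subset L^\infty$, and the almost sure bounds \eqref{Yest}, \eqref{Yestnew}, \eqref{Yestnewnew}; here each factor $\|v_\varepsilon\|_{L^\infty}^p$ coming from the nonlinearity costs only $0^+$ powers of the $H^2$ norm through \eqref{h1+}.

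Then I would collect the genuinely dangerous contributions, that is, those carrying one factor $\nabla v_\varepsilon$ measured in $L^4_x$ together with at most one top-order factor. In \cite{TV} these were treated by the purely spatial Gagliardo--Nirenberg inequality $\|\nabla v_\varepsilon(t)\|_{L^4}\le C\|v_\varepsilon(t)\|_{H^1}^{1/2}\|v_\varepsilon(t)\|_{H^2}^{1/2}$ used pointwise in $t$, which spends a full half power of $\|v_\varepsilon\|_{L^\infty((0,T);H^2)}$ per gradient factor; for large $p$ the accumulation of such factors drives the total power up to, or past, $2$. Instead I would arrange, by H\"older in space and time, that the square of the time-averaged norm $\|v_\varepsilon\|_{L^2((0,T);W^{1,4})}^2$ appears, the remaining factors being absorbed into $C(\omega,T)|\log\varepsilon|^C$ by the bounds of the previous step, with possibly one residual factor $\|v_\varepsilon\|_{L^\infty((0,T);H^2)}^\theta$ surviving. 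Proposition~\ref{maiM} then replaces $\|v_\varepsilon\|_{L^2((0,T);W^{1,4})}^2$ by $C(\omega,T)|\log\varepsilon|^C(1+\|v_\varepsilon\|_{L^\infty((0,T);H^2)}^\eta)$ with $\eta\in(0,1)$, which is exactly the gain over the pointwise-in-time estimate.

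Finally I would convert $\|v_\varepsilon\|_{L^\infty((0,T);H^2)}$ into $\|e^{-Y_\varepsilon}\Delta v_\varepsilon\|_{L^\infty((0,T);L^2)}$ through the elliptic regularity bound \eqref{ellreg} together with \eqref{Yest}, reaching a total power $\eta+\theta$ of the top-order quantity. One then applies Young's inequality to each dangerous term to absorb the logarithmic prefactor at the cost of an arbitrarily small increase of the exponent, turning the contribution into $C(\omega,T)|\log\varepsilon|^C+\|e^{-Y_\varepsilon}\Delta v_\varepsilon\|_{L^\infty((0,T);L^2)}^\gamma$ for a suitable $\gamma\in(1,2)$; the strict inequality $\gamma<2$ holds precisely because $\eta<1$. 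Summing the finitely many terms yields the claimed bound.

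The main obstacle is the bookkeeping guaranteeing $\eta+\theta<2$ uniformly in $p$: one must verify that, once $\|v_\varepsilon\|_{L^2((0,T);W^{1,4})}^2$ has been extracted from every term generated by the degree-$p$ nonlinearity, all the surplus factors are controlled either by $L^\infty$-in-time norms of $v_\varepsilon$ at regularity $1^+$ (hence cost only $0^+$ of $H^2$, uniformly in $p$, by \eqref{h1+} and $H^{1^+}\subset L^\infty$) or by the potentials through \eqref{Yest}--\eqref{Yestnewnew}. This is exactly the step where the time averaging of Proposition~\ref{maiM}, as opposed to a pointwise Sobolev embedding, is indispensable, and it is what lets the scheme of \cite{TV} run for every $p\ge 2$ rather than only for $p\in[2,3]$.
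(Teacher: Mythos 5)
Your proposal is correct and follows essentially the same route as the paper: split $\mathcal{H}_\varepsilon(v_\varepsilon)$ into its constituent terms, substitute for $\partial_t v_\varepsilon$ via the equation, control the $L^\infty$ factors from the nonlinearity at cost $0^+$ through \eqref{h1+}, extract $\|\nabla v_\varepsilon\|_{L^2((0,T);L^4)}^2$ and invoke Proposition~\ref{maiM} to pay only $\eta<1$ powers of the $H^2$ norm, then close with \eqref{ellreg} and Young's inequality to land at $\gamma<2$. The only divergence is the final term of $\mathcal{H}_\varepsilon$, which carries a single (not squared) factor $\|\nabla v_\varepsilon\|_{L^4}$ and which the paper handles instead by the pointwise Gagliardo--Nirenberg bound $\|\nabla u\|_{L^4}^2\leq C\|\nabla u\|_{L^2}\|\Delta u\|_{L^2}$, reaching exponent $\tfrac{3}{2}^+$; your time-averaged scheme would also cover it after a Cauchy--Schwarz in time, so this is a presentational difference rather than a gap.
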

%%%
\begin{proof}
By using the H\"older inequality, the Leibnitz rule and the diamagnetic inequality $|\partial_t |u||\leq |\partial_t u|$ we get that the first three terms in ${\mathcal H}_\varepsilon(v_\varepsilon)$ can be estimated by:
\begin{equation*} \int_{\T^2} |\partial_t v_\varepsilon ||\nabla v_\varepsilon|^2 |v_\varepsilon|^{p-1} 
e^{-(p+2)Y_\varepsilon}
\leq C(\omega) \|\partial_t v_\varepsilon \|_{L^2} 
\|\nabla v_\varepsilon\|_{L^4}^2 \|v_\varepsilon\|_{L^\infty}^{p-1}.
\end{equation*}
where we have used \eqref{Yest}.
%\begin{equation}\label{Yest}\sup_{\varepsilon\in (0,1)} \|Y_\varepsilon(x,\omega)\|_{\infty}<\infty \hbox{ a.s. in } \Omega.\end{equation}
%which by the Brezis-Gallou\"et inequality (see \cite{BG}) can be estimates as follows:
%\begin{multline*}
%\dots \lesssim 
%\|\partial_t v_\varepsilon \|_{L^2(e^{-(p+2)Y_\varepsilon})} \|\nabla v_\varepsilon\|_{L^4(e^{-(p+2)Y_\varepsilon})}^2
%\|v_\varepsilon\|_{H^1}^{p-1}\\\times  \log^\frac{p-1}2 (2+ \|v_\varepsilon\|_{L^2} + \|\Delta v_\varepsilon\|_{L^2})
%\end{multline*}
By using the equation solved by $v_\varepsilon(t,x)$ and the Sobolev embedding $H^{1^+}\subset L^\infty$ we get 
from the estimate above after integration in time:
\begin{multline*}\int_0^T \int_{\T^2} |\partial_t v_\varepsilon ||\nabla v_\varepsilon|^2 |v_\varepsilon|^{p-1} 
e^{-(p+2)Y_\varepsilon}\\
\leq C(\omega)
\|\Delta v_\varepsilon \|_{L^\infty((0,T);L^2)} \|\nabla v_\varepsilon\|_{L^2((0,T);L^4)}^2
\|v_\varepsilon\|_{L^\infty((0,T); H^{1^+})}^{p-1}\\
+C(\omega)\|\nabla v_\varepsilon  \cdot \nabla Y_\varepsilon \|_{L^\infty((0,T);L^2)} 
\|\nabla v_\varepsilon\|_{L^2((0,T);L^4) }^2
\|v_\varepsilon\|_{L^\infty((0,T); H^{1^+})}^{p-1}
\\
+C(\omega)\|v_\varepsilon :|\nabla Y_\varepsilon |^2: \|_{L^\infty((0,T);L^2)} \|\nabla v_\varepsilon\|_{L^2((0,T);L^4)}^2
\|v_\varepsilon\|_{L^\infty((0,T); H^{1^+})}^{p-1}\\
+C(\omega)\|e^{-pY_\varepsilon}v_\varepsilon |v_\varepsilon|^p\|_{L^\infty((0,T);L^2)} \|\nabla v_\varepsilon\|_{L^2((0,T);L^4)}^2
\|v_\varepsilon\|_{L^\infty((0,T); H^{1^+})}^{p-1}
\\
=I+II+III+IV.\end{multline*}
%Next notice that by combining the estimate $\sup_{\varepsilon\in (0,1)} \|v_\varepsilon\|_{L^\infty((0,T); H^1)}<\infty$ a.s. in $\Omega$
%and interpolation we get 
%\begin{equation}\label{int12}\|v_\varepsilon\|_{L^\infty((0,T); H^{1^+})}
%\leq C(\omega) \|v_\varepsilon\|_{L^\infty((0,T); H^{2})}^{0^+} \hbox{ a.s. in } \Omega\end{equation}
Combining \eqref{Yest}, \eqref{h1+}, \eqref{ellreg} and Proposition \ref{maiM} we get
\begin{multline*}I\leq C(\omega,T) |\log \varepsilon|^C \|\Delta v_\varepsilon \|_{L^\infty((0,T);L^2)} \|v_\varepsilon\|_{L^\infty((0,T); H^{2})}^{0^+} 
\big (1+\|v_\varepsilon\|_{L^\infty((0,T);H^2)}^\eta\big)
\\\leq C(\omega,T) |\log \varepsilon|^C+ \|e^{-Y_\varepsilon} \Delta v_\varepsilon \|_{L^\infty((0,T);L^2)}^{1+\eta^+}.
\end{multline*}
By combining now H\"older inequality, \eqref{h1+} and Proposition \ref{maiM} we get
\begin{equation*}
II\leq C(\omega,T) |\log \varepsilon|^C\|\nabla Y_\varepsilon\|_{L^4} \|\nabla v_\varepsilon\|_{L^\infty((0,T);L^4)}  
\big(1+\|v_\varepsilon\|_{L^\infty((0,T);H^2)}\big) ^{\eta^+}
\end{equation*}
and hence by \eqref{Yestnew} and Sobolev embedding $H^1\subset L^4$ we conclude
\begin{multline*}II \leq C(\omega,T)
|\log \varepsilon|^C (1+\|v_\varepsilon\|_{L^\infty((0,T);H^2)})^{1+\eta^+} \\
\leq C(\omega,T) |\log \varepsilon|^C + \|e^{-Y_\varepsilon} \Delta v_\varepsilon \|_{L^\infty((0,T);L^2)}^{1+\eta^+}
\end{multline*}
where we used at the last step \eqref{ellreg}.
We also get
$$III \leq C(\omega,T) |\log \varepsilon|^C + \|e^{-Y_\varepsilon} \Delta v_\varepsilon \|_{L^\infty((0,T);L^2)}^{1+\eta^+}$$
whose proof is identical to the estimate of the term $II$ given above, except that we use \eqref{Yestnewnew} instead of \eqref{Yestnew}.
For the term $IV$ we get by \eqref{Yest}, \eqref{h1}, \eqref{h1+}, \eqref{ellreg} and Proposition \ref{maiM}
\begin{multline*}IV\leq C(\omega,T) |\log \varepsilon|^C \|v_\varepsilon\|_{L^\infty((0,T);L^{2(p+1)})}^{p+1}
 \big( 1+ \|v_\varepsilon\|_{L^\infty((0,T);H^2)})^{\eta^+}\\
\leq C(\omega,T) |\log \varepsilon|^C + \|e^{-Y_\varepsilon} \Delta v_\varepsilon \|_{L^\infty((0,T);L^2)}^{\eta^+},
\end{multline*}
where we used at the last step the Sobolev embedding $H^1\subset L^{2(p+1)}$.
Concerning 
the last term in the expression of ${\mathcal H}_\varepsilon (v_\varepsilon)$ we can estimate it as follows:
\begin{multline*}
\int_{\T^2} |\partial_t v_\varepsilon | |v_\varepsilon |^p 
|\nabla Y_\varepsilon| |\nabla v_\varepsilon| e^{-(p+2)Y_\varepsilon}\\
\leq C(\omega) \|v_\varepsilon \|_{L^{8p}}^p
\|\partial_t v_\varepsilon \|_{L^2} \|\nabla Y_\varepsilon\|_{L^8} 
 \|\nabla  v_\varepsilon\|_{L^4} 
\\
\leq C(\omega) |\log \varepsilon| \|\partial_t v_\varepsilon \|_{L^2} 
 \|\nabla v_\varepsilon \|_{L^4} 
\end{multline*}
where we have used \eqref{Yest}, \eqref{Yestnew}, the Sobolev embedding $H^1\subset L^{8p}$ and \eqref{h1}.
Next we replace $\partial_t v_\varepsilon$ by using the equation solved by $v_\varepsilon$ and,
thanks to the following time-independent Gagliardo-Nirenberg inequality
\begin{equation}\label{GN}
\|\nabla u\|_{L^4}^2\leq C \|\nabla u\|_{L^2}\|\Delta u\|_{L^2},\end{equation}
we can continue the estimate above as follows:
\begin{multline*}\dots \leq  C(\omega) |\log \varepsilon| \|\Delta v_\varepsilon \|_{L^2} 
\|\Delta v_\varepsilon\|_{L^2}^\frac 12
\|\nabla v_\varepsilon\|_{L^2}^\frac 12\\
+C(\omega) |\log \varepsilon| 
\|\nabla v_\varepsilon  \cdot \nabla Y_\varepsilon \|_{L^2} \|\Delta v_\varepsilon\|_{L^2}^\frac 12 
\|\nabla v_\varepsilon\|_{L^2}^\frac 12\\
+C(\omega) |\log \varepsilon| \|v_\varepsilon :|\nabla Y_\varepsilon |^2: \|_{L^2}
\|\Delta v_\varepsilon\|_{L^2}^\frac 12
\|\nabla v_\varepsilon\|_{L^2}^\frac 12
\\+C(\omega) |\log \varepsilon| \|e^{-pY_\varepsilon}v_\varepsilon |v_\varepsilon|^p\|_{L^2}
\|\Delta v_\varepsilon\|_{L^2}^\frac 12
\|\nabla v_\varepsilon\|_{L^2}^\frac 12
\end{multline*}
and by the Sobolev embedding $H^1\subset L^4$ and \eqref{Yest}, \eqref{h1}
\begin{multline*}\dots \leq C(\omega) |\log \varepsilon| 
\|\Delta v_\varepsilon\|_{L^2}^\frac 32 
+C(\omega) |\log \varepsilon| 
\|\nabla v_\varepsilon\|_{L^4} \|\nabla Y_\varepsilon \|_{L^4} \|\Delta v_\varepsilon \|_{L^2}^\frac 12 \\
+C(\omega) |\log \varepsilon| \|v_\varepsilon\|_{L^4} \|:|\nabla Y_\varepsilon |^2: \|_{L^4}
\|\Delta v_\varepsilon\|_{L^2}^\frac 12
+C(\omega) |\log \varepsilon| \|v_\varepsilon |v_\varepsilon|^p\|_{L^2}
\|\Delta v_\varepsilon\|_{L^2}^\frac 12
\\
\leq  C(\omega) |\log \varepsilon| 
\|\Delta v_\varepsilon\|_{L^2}^\frac 32 
+C(\omega) |\log \varepsilon|^2  \|\Delta v_\varepsilon \|_{L^2}^\frac 32
+C(\omega) |\log \varepsilon|^3 
\|\Delta v_\varepsilon\|_{L^2}^\frac 12
+C(\omega) |\log \varepsilon| 
\|\Delta v_\varepsilon\|_{L^2}^\frac 12
\end{multline*}
where we have used \eqref{Yestnew} and \eqref{Yestnewnew}.
Summarizing we get from the computation above and by \eqref{ellreg}
$$\int_0^T \int_{\T^2} |\partial_t v_\varepsilon | |v_\varepsilon |^p 
|\nabla Y_\varepsilon| |\nabla v_\varepsilon| e^{-(p+2)Y_\varepsilon}
\leq C(\omega,T) |\log \varepsilon|^C + \|e^{-Y_\varepsilon}\Delta v_\varepsilon \|_{L^2}^{\frac 32^+}.
$$
\end{proof}
Next we shall also need the following bound from \cite[Proposition 4.4]{TV}.
\begin{prop}\label{bounds} For every $\mu>0$ there exists a random variable $C(\omega)$ such that:
\begin{equation}\label{mart}
\big |{\mathcal F}_{\varepsilon}(v_\varepsilon)-\int_{\T^2} |\Delta v_\varepsilon|^2 e^ {-2Y_\varepsilon}
\big |< \mu \|e^{-Y_\varepsilon} \Delta v_\varepsilon\|_{L^2}^2
%\\
+
C(\omega) |\log \varepsilon|^4
\end{equation}
and
\begin{equation}\label{pastor}|{\mathcal G}_{\varepsilon}(v_\varepsilon)|<
\mu \|e^{-Y_\varepsilon} \Delta v_\varepsilon\|_{L^2}^2 +C(\omega)  |\log \varepsilon|^4.
\end{equation}
%where ${\mathcal P} (\|v_0\|_{H^1})$ is as in Proposition~\ref{firstcor}.
\end{prop}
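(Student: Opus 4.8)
The plan is to start from the explicit expressions of $\mathcal{F}_\varepsilon$ and $\mathcal{G}_\varepsilon$ provided by \cite[Proposition 4.1]{TV}, whose construction is designed precisely so that the leading term of $\mathcal{F}_\varepsilon(v_\varepsilon)$ is $\int_{\T^2}|\Delta v_\varepsilon|^2 e^{-2Y_\varepsilon}$, all remaining contributions being of strictly lower order in the derivatives of $v_\varepsilon$ while carrying factors of $\nabla Y_\varepsilon$ or $:|\nabla Y_\varepsilon|^2:$. Thus proving \eqref{mart} amounts to estimating, term by term, the difference $\mathcal{F}_\varepsilon(v_\varepsilon)-\int_{\T^2}|\Delta v_\varepsilon|^2 e^{-2Y_\varepsilon}$, and proving \eqref{pastor} amounts to estimating each summand of $\mathcal{G}_\varepsilon(v_\varepsilon)$ directly.

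For \eqref{mart} I would sort the remainder terms according to how many factors of $\Delta v_\varepsilon$ they contain. The dangerous ones carry exactly one such factor; on each of them I would apply H\"older's inequality so as to isolate $e^{-Y_\varepsilon}\Delta v_\varepsilon$ in $L^2$, bounding the complementary factors by means of \eqref{Yest} for $e^{-Y_\varepsilon}$, the logarithmic estimates \eqref{Yestnew} and \eqref{Yestnewnew} for $\nabla Y_\varepsilon$ and $:|\nabla Y_\varepsilon|^2:$, and the a priori bound \eqref{h1} together with the Sobolev embeddings $H^1\subset L^q$, $q<\infty$. A single application of Young's inequality $ab\le \tfrac{\mu}{2}a^2+\tfrac1{2\mu}b^2$ then puts the top-order content into $\mu\|e^{-Y_\varepsilon}\Delta v_\varepsilon\|_{L^2}^2$ and relegates the rest to the additive constant; since $\nabla Y_\varepsilon$ costs one power of $|\log\varepsilon|$ and $:|\nabla Y_\varepsilon|^2:$ two powers, the worst complementary factor costs $|\log\varepsilon|^2$, whose square yields exactly the announced $|\log\varepsilon|^4$. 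The terms with no factor of $\Delta v_\varepsilon$ are controlled by $C(\omega)|\log\varepsilon|^4$ directly through \eqref{h1} and \eqref{Yestnew}--\eqref{Yestnewnew}.

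The bound \eqref{pastor} follows from the same scheme applied to the summands of $\mathcal{G}_\varepsilon(v_\varepsilon)$, with the only additional ingredient being the Gagliardo--Nirenberg inequality \eqref{GN}, used to rewrite any occurrence of $\|\nabla v_\varepsilon\|_{L^4}^2$ in terms of $\|\nabla v_\varepsilon\|_{L^2}\|\Delta v_\varepsilon\|_{L^2}$ so that, once more, a single power of $\|e^{-Y_\varepsilon}\Delta v_\varepsilon\|_{L^2}$ can be isolated and absorbed by Young's inequality.

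The main obstacle is structural rather than computational: one must verify that in both $\mathcal{F}_\varepsilon(v_\varepsilon)-\int_{\T^2}|\Delta v_\varepsilon|^2 e^{-2Y_\varepsilon}$ and $\mathcal{G}_\varepsilon(v_\varepsilon)$ \emph{no} term is genuinely quadratic in $\Delta v_\varepsilon$ apart from the already-extracted leading term, so that every occurrence of the top-order quantity appears linearly and can therefore be made arbitrarily small by the Young step for any prescribed $\mu$. This is exactly what the design of the modified energies in \cite[Proposition 4.1]{TV} guarantees; the secondary point is the careful bookkeeping of the logarithmic losses so that they accumulate to at most $|\log\varepsilon|^4$, which is ensured by the powers recorded in \eqref{Yestnew} and \eqref{Yestnewnew}.
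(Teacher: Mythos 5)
The first thing to note is that the paper does not prove this proposition at all: it is imported verbatim as \cite[Proposition 4.4]{TV}, so there is no in-paper argument to compare yours against. Judged on its own terms, your proposal is a reasonable reconstruction of the kind of argument that establishes such bounds (expand the modified energies, isolate the top-order term, H\"older plus Young to absorb a single linear occurrence of $e^{-Y_\varepsilon}\Delta v_\varepsilon$ into $\mu\|e^{-Y_\varepsilon}\Delta v_\varepsilon\|_{L^2}^2$, and track the logarithmic losses through \eqref{Yestnew}--\eqref{Yestnewnew}). The strategy is sound and is consistent with how the surrounding estimates in this paper (e.g.\ the treatment of the terms $I$--$IV$ in Proposition \ref{firstcor}) are carried out.

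However, as a proof it has a genuine gap, and you have in fact located it yourself in your last paragraph: the entire content of the proposition lies in the term-by-term structure of $\mathcal{F}_\varepsilon$ and $\mathcal{G}_\varepsilon$, and you never write these energies down. The claim that every remainder term contains at most one factor of $\Delta v_\varepsilon$ (so that Young's inequality can make its coefficient arbitrarily small), and the claim that the complementary factors cost at most $|\log\varepsilon|^2$ each (so that the additive constant is $C(\omega)|\log\varepsilon|^4$ and not worse), are precisely the statements that need verification against the explicit expressions in \cite[Proposition 4.1]{TV}. Asserting that "this is exactly what the design of the modified energies guarantees" is circular: the design guarantees the cancellation identity \eqref{iaae}, not automatically the size of the lower-order terms. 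Without instantiating the actual summands, the argument is a plausible plan rather than a proof; to close it you would need to quote the formulas for $\mathcal{F}_\varepsilon$ and $\mathcal{G}_\varepsilon$ from \cite{TV} and run your H\"older/Young scheme on each term explicitly.
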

%%%%
We have now all tools to prove \eqref{key}.
%Following Proposition 4.1 in \cite{TV} we get 
%$$\frac d{dt} ({\mathcal F}_\varepsilon(v_\varepsilon)-{\mathcal G}_\varepsilon(v_\varepsilon))=-{\mathcal H}_\varepsilon (v_\varepsilon).$$
By integration in time of \eqref{iaae} and by combining Proposition \ref{bounds} (where we choose $\mu$ small enough 
in order to absorb on the l.h.s. the term $\|e^{-Y_\varepsilon} \Delta v_\varepsilon\|_{L^\infty((0,T);L^2)}^2 )$ with Proposition \ref{firstcor}
we get
$$\|e^{-Y_\varepsilon} \Delta v_\varepsilon\|_{L^\infty((0,T);L^2)}^2\leq C(\omega, T) |\log \varepsilon|^C + \|e^{-Y_\varepsilon} \Delta v_\varepsilon\|_{L^\infty((0,T);L^2)}^\gamma,
\quad \gamma<2
$$
and hence we conclude \eqref{key}.

\end{document}